\newtheorem{remark}{Remark}
\newcommand{\R}{{\if mm {\rm I}\mkern -3mu{\rm R}\else \leavevmode
\hbox{I}\kern -.17em\hbox{R} \fi}}
\newcommand{\bu}{\mbox{\boldmath{$u$}}}
\newcommand{\bv}{\mbox{\boldmath{$v$}}}
\newcommand{\bw}{\mbox{\boldmath{$w$}}}
\newcommand{\bo}{\mbox{\boldmath{$\omega$}}}
\newcommand{\bx}{\mbox{\boldmath{$x$}}}
\newcommand{\bxi}{\mbox{\boldmath{$\xi$}}}
\newcommand{\bz}{\mbox{\boldmath{$z$}}}
\newcommand{\fb}{\mbox{\boldmath{$f$}}}
\newcommand{\bsigma}{\mbox{\boldmath{$\sigma$}}}
\newcommand{\btau}{\mbox{\boldmath{$\tau$}}}
\newcommand{\bvarepsilon}{\mbox{\boldmath{$\varepsilon$}}}
\newcommand{\bnu}{\mbox{\boldmath{$\nu$}}}
\newcommand{\bzero}{\mbox{\boldmath{$0$}}}
\newcommand{\cS}{\mbox{{${\cal S}$}}}
\newcommand{\cR}{\mbox{{${\mathcal R}$}}}
\newcommand{\cP}{\mbox{{${\cal P}$}}}
\newcommand{\cA}{\mbox{{${\cal A}$}}}
\newcommand{\cB}{\mbox{{${\cal B}$}}}
\def\real{\mathbb{R}}
\newtheorem{theorem}{Theorem}[section]
\newtheorem{lemma}[theorem]{Lemma}
\newtheorem{definition}[theorem]{Definition}
\newtheorem{proposition}[theorem]{Proposition}
\numberwithin{equation}{section}
\title{\bf Duality Arguments in the Analysis of a~Viscoelastic  Contact  Problem}
\author{Piotr Bartman$^{1,2}$,\ Anna Ochal$^{2}$,\ Mircea Sofonea$^{3}$\\[4mm]
                {\it \small $1$ Doctoral School of Exact and Natural Sciences, Jagiellonian University}
                \\{\it\small Lojasiewicza 11, 30348 Krakow, Poland}
                \\ [4mm]
                {\it \small $2$ Chair of Optimization and Control,}
			{\it \small Jagiellonian University}
			\\{\it\small Lojasiewicza 6, 30348 Krakow, Poland}		\\[4mm]
			{\it \small $3$ Laboratoire de Math\'ematiques et Physique}
	        {\it \small
				University of Perpignan Via Domitia}
			\\{\it\small 52 Avenue Paul Alduy, 66860 Perpignan, France}		}}
\begin{document}
\maketitle

\vskip4mm
\begin{abstract}
\noindent We consider a  mathematical model  which describes the quasistatic frictionless contact of a viscoelastic body with a rigid-plastic foundation.  We describe the mechanical assumptions,
list the hypotheses on the data and provide three different variational formulations of the model in which the unknowns are the displacement field, the stress field and the strain field, respectively. These  formulations have a different structure. Nevertheless,  we prove that they are pairwise dual of each other.
Then, we deduce the unique weak solvability of the
contact problem as well as the Lipschitz continuity of its weak solution  with respect to the data.
The proofs are based on recent results on history-dependent variational inequalities and  inclusions.
Finally, we present numerical simulations in the study of the contact problem, together with the corresponding mechanical interpretations.

\end{abstract}

\bigskip \noindent {\bf AMS Subject Classification\,:}   74M15, 74M10,
47J22,  49J40, 49J21, 34G25.

\bigskip \noindent {\bf Key words\,:}
viscoelastic material, frictionless  contact, history-dependent variational inequality, history-dependent inclusion, weak solution, numerical simulations.

Dedicated to Professor Zhenhai Liu on the occasion of his 65th birthday.
%-----------------------------------------------------------------------------

\vskip15mm

\section{Introduction}\label{int}

Contact phenomena between deformable bodies arise in industry and everyday life. They are modeled by strongly nonlinear boundary value problems  which usually do not have classical solutions. Therefore, their study is made by using a variational approach, that consists to replace the strong formulation of the problem by
a weak or variational formulation, which is more convenient for mathematical analysis and numerical simulations.

The weak formulations of contact problems vary from problem to problem, from author to author and even from paper to paper.
{They lead to challenging nonlinear problems which, in general, are expressed in terms of either variational and hemivariational inequalities or inclusions, including differential inclusions. Comprehensive references in the theory of variational inequalities are \cite{BC,B} and, more recently, \cite{GJKR}. There, various existence and uniqueness results are presented, obtained by using different functional arguments. Hemivariational inequalities are inequality problems governed by a locally Lipschitz continuous function. Their analysis is carried out by using arguments of pseudomonotonicity for multivalued operators combined with the properties of the generalized directional derivative and the subdiffrential in the sense of Clarke. Basic references in the field are \cite{NP, P}.
Finally, for the theory of differential inclusion,we mention the book \cite{KOZ} and the survey paper \cite{ZOZ}. The book \cite{KOZ} deals with the theory of semilinear differential inclusions in infinite dimensional spaces, in a setting in which neither convexity of the map or compactness of the multi-operators is supposed. There, arguments of degree theory are used for solving operator inclusions, fixed points and optimization problems. The theory is applied to the investigation of semilinear differential inclusions in Banach spaces. In the survey paper \cite{ZOZ}
the authors discuss applications of differential and operator inclusions to some optimization and optimal control problems, including an optimal feedback control problem for a mathematical model of the motion of weakly concentrated water polymer solutions. }

For most of the problems which describe the contact of a viscoelastic material, the variational formulation is given in a form of a variational inequality with time-independent unilateral constraints in which the unknown is the displacement field.
References on this topic include  \cite{C,DL,EJK,HS,KO, P1,SM}.  Nevertheless, for several problems it is more convenient to consider the stress field as the main unknown and, therefore, to obtain a variational formulation
in term of the stress field. Such a formulation is usually in a form of  a variational inequality too, but it has a different structure since in this case the unilateral constraints are time-dependent. References in the field are \cite{MOS,S,SM}, for instance. Besides the displacement and the stress fields, the strain field can be successfully used to study various contact problems, as proved recently. Choosing the strain field as the main unknown leads to a variational formulation which is in the form of a history-dependent inclusion or a sweeping process.
Reference in the field are  \cite{AH,AS,NS2,SX}, for instance.

The aim of this current paper is two fold.
The first one is to provide three different variational formulations for a viscoelastic contact problem (in which the unknowns are the displacement, the stress and the strain field, respectively), to prove their equivalence and their unique solvability, as well.  Our proofs show that the corresponding variational formulations are pairwise dual to each other (in the sense introduced in \cite{RS1,S}), which consists the first trait of novelty of our work.
Our second aim in this paper is to introduce a numerical approximation scheme of the problem (based on the variational formulation in displacements) and to provide numerical simulations together with the corresponding mechanical interpretations.
This represents the second novelty of the current  paper.

The rest of the manuscript is organized as follows. In Section \ref{s2} we present some notation and preliminary material which are  needed in the next sections. This concerns the properties of the function spaces
we use, a result on the history-dependent operators and some abstract results for history-dependent variational inequalities and inclusions. In Section \ref{s3} we introduce the viscoelastic model of contact and we provide a description of the equations and boundary value conditions.
Then, we list the hypotheses on the data.
In Section \ref{s4} we consider there variational formulations of the problem and prove that these formulations are pairwise dual of each other.
Then, in Section \ref{s5}  we state and prove existence and uniqueness results, which allow us to define the concept of a weak solution to the contact model.
Finally, we  end this paper with Section \ref{s7} in which we present a numerical scheme for the displacement variational formulation, together with some numerical simulations and the corresponding mechanical interpretations.

\section{Notation and preliminaries}\label{s2}
\setcounter{equation}0

The preliminary material we present in this section concerns  basic notation, an existence and uniqueness result for a class of  time-dependent inclusions, and some properties of the function spaces in Contact Mechanics. Everywhere in this section $X$ represents a real Hilbert space endowed with an inner product $(\cdot,\cdot)_X$ and its associated norm $\|\cdot\|_X$, and  $2^{X}$ denotes the set of parts of $X$.

\medskip\noindent
{\bf Basic notation.}  We use the notation $N_K$ for
the outward normal cone of a  nonempty closed convex subset $K\subset X$.
It is well known that
 $N_K \colon X\to 2^X$  and, for any $u,\,f\in X$, we have
\begin{eqnarray}
&&\label{5}
f\in N_K(u)\ \Longleftrightarrow\  u\in K,\quad (f,v-u)_X\le 0\quad\text{for all}\ \, v\in K.
\end{eqnarray}
We also recall that a convex function $\varphi\colon X\to\mathbb{R}$ is said to be subdifferentiable (in the
sense of the convex analysis) if for any $u\in X$ there exists an element $\xi\in X$ such that
\begin{equation*}\label{sub}
\varphi(v)-\varphi(u)\ge (\xi,v - u)_X
\ \ \ \ \mbox{for all} \ \ \,v \in X.
\end{equation*}

Consider now an interval of time $[0,T]$ with $T>0$. We denote by $C([0,T];X)$  the space of continuous functions defined on $[0,T]$ with values in $X$. Then, it is well known that
$C([0,T];X)$ is a Banach space
equipped with the norm
\begin{equation}\label{cann} \|v\|_{C([0,T];X)} =
	\max_{t\in [0,T]}\,\|v(t)\|_X.
\end{equation}
For an operator $\cS \colon C([0,T];X)\to C([0,T];X)$ and a function $u\in C([0,T];X)$   we use the shorthand notation $\cS u(t)$ to
represent the value of the function $\cS u$ at the point $t\in [0, T]$, that is, $\cS u(t):=(\cS u)(t)$. Moreover, if $A\colon X\to X$, then
$A+\cS$ will represent a shorthand notation for the operator  which maps any function $u\in C([0,T];X)$ to the function
$t\mapsto Au(t)+\cS u(t)\in C([0,T];X)$.

\begin{definition}\label{d0}
	An operator $\cS\colon C([0,T];X)$ $\to C([0,T];X)$
	is said to be a history-dependent operator if there exists  $L>0$ such that
	\begin{eqnarray}
		&&\label{Ahd}\|\cS u(t)-\cS v(t)\|_X\le L\displaystyle\int_0^t
		\|u(s)-v(s)\|_X\,ds\ \quad\forall\,u,\, v\in C([0,T];X),\ t\in [0,T].\nonumber
	\end{eqnarray}
\end{definition}

History-dependent operators arise in Functional Analysis, Solid Mechanics and Contact Mechanics, as well.
General properties, examples and mechanical interpretations can be found in \cite{SM}.
An important  property of history-dependent operators which will be useful in this paper is the following.

\begin{theorem}\label{t0} Let $\widetilde{A}\colon X\to {X}$ be a linear continuous operator such that
\begin{equation*}\label{AAA}
 (\widetilde{A}u,u)_X\ge m \|u\|_X^2\ \ \forall\, u\in X
\end{equation*}
with some $m>0$ and consider a history-dependent ope\-ra\-tor
$\widetilde{\cS}\colon C([0,T];X)\to C([0,T];X)$. Then the operator $\widetilde{A}+\widetilde{\cS}\colon C([0,T];X)\to C([0,T];X)$ is invertible and its inverse is of the form $\widetilde{A}^{-1}+\widetilde{\cR}\colon C([0,T];X)\to C([0,T];X)$, where  $\widetilde{A}^{-1}:X\to X$ represents the inverse of the operator $\widetilde{A}$ and  $\widetilde{\cR}\colon C([0,T];X)\to C([0,T];X)$ is a~history-dependent operator.
\end{theorem}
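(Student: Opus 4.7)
The plan is to rewrite the equation $(\widetilde{A}+\widetilde{\cS})u = f$ as a fixed point problem, solve it by an iterated contraction argument, and then read off the structure of the inverse.

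First I would fix $f\in C([0,T];X)$ and note that, since $\widetilde{A}$ is linear, continuous and strongly monotone with constant $m$, the operator $\widetilde{A}^{-1}\colon X\to X$ exists, is linear and continuous, and is Lipschitz with constant $1/m$ (this is the standard Lax--Milgram/strong monotonicity estimate). Then $(\widetilde{A}+\widetilde{\cS})u=f$ is equivalent to $u=\Lambda_f u$, where
\[
\Lambda_f u(t):=\widetilde{A}^{-1}\!\bigl(f(t)-\widetilde{\cS}u(t)\bigr),\qquad t\in[0,T].
\]
For $u,v\in C([0,T];X)$, combining the Lipschitz constant of $\widetilde{A}^{-1}$ with Definition~\ref{d0} yields
\[
\|\Lambda_f u(t)-\Lambda_f v(t)\|_X\le \tfrac{L}{m}\int_0^t\|u(s)-v(s)\|_X\,ds.
\]
Iterating this inequality produces the classical estimate
\[
\|\Lambda_f^{\,n} u(t)-\Lambda_f^{\,n} v(t)\|_X\le \frac{(LT/m)^n}{n!}\,\|u-v\|_{C([0,T];X)},
\]
so for $n$ large enough $\Lambda_f^{\,n}$ is a contraction on the Banach space $C([0,T];X)$. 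A standard consequence of the Banach fixed point theorem then gives a unique $u_f\in C([0,T];X)$ with $\Lambda_f u_f=u_f$, which proves that $\widetilde{A}+\widetilde{\cS}$ is a bijection on $C([0,T];X)$.

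Next I would extract the structure of the inverse. Setting $\widetilde{\cR}f:=u_f-\widetilde{A}^{-1}f\in C([0,T];X)$ by construction gives $(\widetilde{A}+\widetilde{\cS})^{-1}=\widetilde{A}^{-1}+\widetilde{\cR}$, as required. It remains to show that $\widetilde{\cR}$ is history-dependent. For two data $f_1,f_2$ with corresponding solutions $u_1,u_2$, linearity of $\widetilde{A}$ yields
\[
u_1(t)-u_2(t)=\widetilde{A}^{-1}\!\bigl(f_1(t)-f_2(t)\bigr)-\widetilde{A}^{-1}\!\bigl(\widetilde{\cS}u_1(t)-\widetilde{\cS}u_2(t)\bigr),
\]
so
\[
\widetilde{\cR}f_1(t)-\widetilde{\cR}f_2(t)=-\widetilde{A}^{-1}\!\bigl(\widetilde{\cS}u_1(t)-\widetilde{\cS}u_2(t)\bigr),
\]
and the Lipschitz bound on $\widetilde{A}^{-1}$ together with Definition~\ref{d0} gives
\[
\|\widetilde{\cR}f_1(t)-\widetilde{\cR}f_2(t)\|_X\le \tfrac{L}{m}\int_0^t\|u_1(s)-u_2(s)\|_X\,ds.
\]

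The remaining step, and the only slightly delicate one, is to control $\|u_1(s)-u_2(s)\|_X$ by an integral of $\|f_1-f_2\|_X$. From the displayed identity above and the Lipschitz property of $\widetilde{A}^{-1}$ one has
\[
\|u_1(s)-u_2(s)\|_X\le \tfrac{1}{m}\|f_1(s)-f_2(s)\|_X+\tfrac{L}{m}\int_0^s\|u_1(\tau)-u_2(\tau)\|_X\,d\tau,
\]
and Gronwall's inequality then yields an estimate of the form $\|u_1(s)-u_2(s)\|_X\le C_1\|f_1(s)-f_2(s)\|_X+C_2\int_0^s\|f_1(\tau)-f_2(\tau)\|_X\,d\tau$ on $[0,T]$. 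Inserting this into the previous bound and interchanging the order of integration (or absorbing the pointwise term into the integral up to $t$) produces the desired inequality
\[
\|\widetilde{\cR}f_1(t)-\widetilde{\cR}f_2(t)\|_X\le \widetilde{L}\int_0^t\|f_1(s)-f_2(s)\|_X\,ds
\]
for a suitable constant $\widetilde{L}>0$, so $\widetilde{\cR}$ is history-dependent in the sense of Definition~\ref{d0}. The main obstacle is precisely this final Gronwall-type bookkeeping: one needs to make sure that after eliminating the pointwise term $\|f_1(s)-f_2(s)\|_X$ the resulting estimate is genuinely of integral type, which is why iterating the bound once more (or using an additional integration) is essential.
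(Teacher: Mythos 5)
Your argument is correct, and it is worth noting that the paper itself does not prove Theorem \ref{t0} at all: it simply refers the reader to \cite[p.~55]{SMBOOK}, where the statement is obtained as a consequence of an abstract result on nonlinear implicit equations in Banach spaces. Your proposal therefore supplies a direct, self-contained proof in place of that citation: you rewrite $(\widetilde{A}+\widetilde{\cS})u=f$ as the fixed point problem $u=\widetilde{A}^{-1}(f-\widetilde{\cS}u)$, use the bound $\|\widetilde{A}^{-1}\|\le 1/m$ coming from strong monotonicity together with the Volterra-type estimate of Definition~\ref{d0} to show that some iterate of the map is a contraction on $C([0,T];X)$, and then identify $\widetilde{\cR}f:=u_f-\widetilde{A}^{-1}f$ and verify its history-dependence via the identity $\widetilde{\cR}f_1(t)-\widetilde{\cR}f_2(t)=-\widetilde{A}^{-1}\bigl(\widetilde{\cS}u_1(t)-\widetilde{\cS}u_2(t)\bigr)$ and a Gronwall estimate for $u_1-u_2$. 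These are exactly the ingredients (Lipschitz invertibility of $\widetilde{A}$ plus an iterated integral/Gronwall estimate) on which the cited abstract theorem rests, so what your route buys is transparency and elementarity, at the cost of redoing machinery the reference packages once and for all. One small simplification: the final ``bookkeeping'' you worry about is harmless, because the pointwise term $C_1\|f_1(s)-f_2(s)\|_X$ produced by Gronwall sits \emph{under} the outer integral $\int_0^t\cdot\,ds$ in the bound for $\widetilde{\cR}f_1(t)-\widetilde{\cR}f_2(t)$, so it already yields a term of the required integral form, while the double integral is absorbed by Fubini (or simply by bounding the inner integral by $\int_0^t$ and picking up a factor $T$); no further iteration of the estimate is needed.
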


A proof of Theorem \ref{t0} can be found in \cite[p. 55]{SMBOOK}, based on results on nonlinear implicit equations in Banach spaces.

\medskip\noindent
{\bf  History-dependent variational inequalities and inclusions.}  Consider a~set~$K$, the operators
$A$,  $\mathcal{S}$, a function $f$ and a  set-valued  mapping $\Sigma$,
which satisfy the following conditions.

\begin{itemize}

	\item [$(\mathcal{K})$\ \ ]  $K\subset X$ is a nonempty closed convex subset.

	\item [$(\mathcal{A})$\ \ ]  $A:X\to X$ is a strongly monotone and Lipschitz continuous operator.

\item [$(\mathcal{S})$\ \ ] $\cS\colon C([0,T];X)\to C([0,T];X)$ is a history-dependent operator.

\item [$(j)$\ \ ] $j\colon X\to\mathbb{R}$ is a convex lower semicontinuous function.

\item [$({f})$\ \ ]$f\in C([0,T];X)$.

\item [$(\Sigma)$\ \ ] $\Sigma\colon [0,T]\rightarrow 2^X$ and  there exist
a nonempty closed convex set $\Sigma_0\subset X$ and   a~function  $g\in C([0,T];X)$  such that $\Sigma(t)= \Sigma_0+g(t)$ for all $t\in[0,T]$.

\end{itemize}

We have the following existence and uniqueness results.

\begin{theorem}\label{t1}	Assume $(\mathcal{K})$, $(\mathcal{A})$, $(\mathcal{S})$, $(j)$ and $(f)$.
Then, there exists a unique function $u\in C([0,T];X)$ such that for all $t\in[0,T]$ the following inequality holds:
\begin{eqnarray}\label{zz}
&u(t)\in K,&(Au(t),v-u(t))_X+(\cS u(t),v-u(t))_X\\ [2mm]
&&\qquad+j(v)-j(u(t))\ge (f(t),v-u)_X\qquad\forall\, v\in K.\nonumber
\end{eqnarray}

\end{theorem}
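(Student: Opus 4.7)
The plan is to reduce the history-dependent problem to a family of time-parametrized elliptic variational inequalities coupled through a fixed-point equation on $C([0,T];X)$.

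First, for an arbitrary $\eta\in C([0,T];X)$ and each $t\in[0,T]$, I would consider the auxiliary elliptic variational inequality: find $u_\eta(t)\in K$ such that
\begin{equation*}
(Au_\eta(t),v-u_\eta(t))_X+j(v)-j(u_\eta(t))\ge (f(t)-\eta(t),v-u_\eta(t))_X\quad\forall\, v\in K.
\end{equation*}
Since $A$ is strongly monotone and Lipschitz continuous, $K$ is nonempty closed convex, and $j$ is convex and lower semicontinuous, the classical existence and uniqueness result for elliptic quasivariational inequalities of the second kind (an extension of the Lions--Stampacchia theorem) yields a unique $u_\eta(t)\in K$. Standard stability arguments for this class of inequalities, obtained by testing with $v=u_\eta(s)$ in the inequality at time $t$ and vice versa, give the Lipschitz bound
\begin{equation*}
\|u_\eta(t)-u_\eta(s)\|_X\le \tfrac{1}{m}\bigl(\|f(t)-f(s)\|_X+\|\eta(t)-\eta(s)\|_X\bigr),
\end{equation*}
where $m>0$ is the strong monotonicity constant of $A$. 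Hence $t\mapsto u_\eta(t)$ belongs to $C([0,T];X)$.

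Next, I would define the operator $\Lambda\colon C([0,T];X)\to C([0,T];X)$ by $\Lambda\eta:=\cS u_\eta$. A function $u\in C([0,T];X)$ solves \eqref{zz} if and only if $\eta=\cS u$ is a fixed point of $\Lambda$ and $u=u_\eta$. The same testing argument applied to two right-hand sides $\eta_1,\eta_2$ gives the pointwise estimate
\begin{equation*}
\|u_{\eta_1}(t)-u_{\eta_2}(t)\|_X\le \tfrac{1}{m}\|\eta_1(t)-\eta_2(t)\|_X,
\end{equation*}
which combined with the history-dependence of $\cS$ yields
\begin{equation*}
\|\Lambda\eta_1(t)-\Lambda\eta_2(t)\|_X\le \tfrac{L}{m}\int_0^t\|\eta_1(s)-\eta_2(s)\|_X\,ds.
\end{equation*}

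The main technical step, and the one I expect to be the decisive one, is turning this Volterra-type inequality into a genuine contraction. Iterating the above bound $n$ times produces $\|\Lambda^n\eta_1-\Lambda^n\eta_2\|_{C([0,T];X)}\le \tfrac{(LT/m)^n}{n!}\|\eta_1-\eta_2\|_{C([0,T];X)}$, so some power $\Lambda^n$ is a strict contraction on the Banach space $C([0,T];X)$ equipped with its canonical norm \eqref{cann}. By the Banach contraction principle applied to $\Lambda^n$ (or, equivalently, by endowing $C([0,T];X)$ with the equivalent exponentially weighted norm $\|\eta\|_\kappa:=\max_{t\in[0,T]}e^{-\kappa t}\|\eta(t)\|_X$ with $\kappa>L/m$), the operator $\Lambda$ admits a unique fixed point $\eta^*\in C([0,T];X)$. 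Setting $u:=u_{\eta^*}$ yields the desired solution; uniqueness follows because any solution of \eqref{zz} must satisfy $\cS u=\eta^*$, forcing $u=u_{\eta^*}$.
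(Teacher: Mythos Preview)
Your argument is correct and matches the strategy the paper invokes: the paper does not give a self-contained proof of this theorem but only cites \cite[Ch.~3]{SM}, noting that the proof rests on ``arguments of convex analysis, monotone operators and a fixed point result for history-dependent operators.'' Your decomposition into a time-parametrized elliptic variational inequality (handled by the Lions--Stampacchia-type result) followed by a Banach fixed-point argument on $C([0,T];X)$ via the Volterra estimate for $\Lambda\eta=\cS u_\eta$ is exactly that scheme, so your proposal is essentially the same approach as the one the paper refers to.
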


\begin{theorem}\label{t2}	Assume $(\mathcal{A})$, $(\mathcal{S})$ and $(\Sigma)$.
	Then, there exists a unique function $u\in C([0,T];X)$ such that for all $t\in[0,T]$ the following inclusion holds:
	\begin{equation*}
	-u(t)\in N_{\Sigma(t)}\big(Au(t)+\cS u(t)\big).
	\end{equation*}

\end{theorem}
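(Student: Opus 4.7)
The plan is to reduce the inclusion to a history-dependent variational inequality in a new unknown $\sigma$, to which Theorem~\ref{t1} can then be applied.

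By the characterization~(\ref{5}) of the normal cone, the inclusion $-u(t)\in N_{\Sigma(t)}(Au(t)+\cS u(t))$ is equivalent to $Au(t)+\cS u(t)\in\Sigma(t)$ together with
$$\bigl(u(t),\,\eta-(Au(t)+\cS u(t))\bigr)_X\ge 0\quad\forall\,\eta\in\Sigma(t).$$
Exploiting the decomposition $\Sigma(t)=\Sigma_0+g(t)$ from $(\Sigma)$, I would substitute $\eta=\zeta+g(t)$ with $\zeta\in\Sigma_0$ and introduce the new unknown
$$\sigma(t):=Au(t)+\cS u(t)-g(t),$$
so that the problem becomes: find $\sigma\in C([0,T];X)$ with $\sigma(t)\in\Sigma_0$ and $(u(t),\zeta-\sigma(t))_X\ge 0$ for all $\zeta\in\Sigma_0$.

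Next, I would eliminate $u$ by invoking Theorem~\ref{t0}: the operator $u\mapsto Au+\cS u$ is invertible on $C([0,T];X)$ with inverse of the form $A^{-1}+\cR$, where $\cR$ is history-dependent. Applied with right-hand side $\sigma+g$, this yields
$$u(t)=A^{-1}\sigma(t)+A^{-1}g(t)+\cR(\sigma+g)(t)$$
by linearity of $A^{-1}$. Inserting this into the previous inequality and moving the $A^{-1}g$-term to the right-hand side rewrites the problem as a history-dependent variational inequality for $\sigma$ alone:
$$\sigma(t)\in\Sigma_0,\ \ (A^{-1}\sigma(t),\zeta-\sigma(t))_X+(\mathcal{R}_0\sigma(t),\zeta-\sigma(t))_X\ge(f_0(t),\zeta-\sigma(t))_X\ \ \forall\,\zeta\in\Sigma_0,$$
with $\mathcal{R}_0\sigma(t):=\cR(\sigma+g)(t)$ and $f_0(t):=-A^{-1}g(t)$. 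This matches exactly the framework of Theorem~\ref{t1} with $K=\Sigma_0$, $j\equiv 0$, linear operator $A^{-1}$ (strongly monotone with constant $m/M^2$ and Lipschitz with constant $1/m$, where $m$ and $M$ denote the monotonicity and Lipschitz constants of $A$), history-dependent operator $\mathcal{R}_0$ (same Lipschitz constant as $\cR$), and $f_0\in C([0,T];X)$ by the continuity of $g$. Theorem~\ref{t1} then produces a unique $\sigma\in C([0,T];X)$, and the unique $u$ is recovered through the inversion formula.

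I expect the main technical point to be the inversion step and the verification that $A^{-1}$ still fits the framework~$(\mathcal{A})$, which is immediate when $A$ is linear, as required by Theorem~\ref{t0}. If $(\mathcal{A})$ is intended to allow a genuinely nonlinear strongly monotone Lipschitz $A$, one would replace the global inversion by a Banach fixed-point argument: at each $t$ the ``frozen" pointwise inclusion $-\Lambda\eta(t)\in N_{\Sigma(t)}(A\Lambda\eta(t)+\eta(t))$ is uniquely solvable by the classical theory of elliptic variational inequalities with a strongly monotone operator, and the map $\eta\mapsto\cS(\Lambda\eta)$ is shown to be a contraction on $C([0,T];X)$ equipped with an exponentially weighted norm, yielding the required fixed point.
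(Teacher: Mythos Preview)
The paper does not prove Theorem~\ref{t2}; it merely cites it from \cite[Ch.~6]{S}, remarking only that the argument relies on ``convex analysis, monotone operators and a fixed point result for history-dependent operators.'' So there is no in-paper proof to compare against line by line.

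Your reduction---shift by $g$ to land on the fixed convex $\Sigma_0$, invert $A+\cS$ via Theorem~\ref{t0}, and feed the resulting inequality into Theorem~\ref{t1}---is correct and is precisely the manoeuvre the paper itself sketches in Remark~\ref{r1} for the concrete stress formulation $\cP_2^V$. The one caveat, which you correctly flag, is that Theorem~\ref{t0} is stated only for \emph{linear} $A$, whereas hypothesis~$(\mathcal{A})$ allows a general strongly monotone Lipschitz operator; so your primary argument covers only that subcase. Your fallback fixed-point scheme (freeze $\cS u=\eta$, solve the time-parametrised elliptic inclusion to define $\Lambda\eta$, then find a fixed point of $\eta\mapsto\cS(\Lambda\eta)$ in an exponentially weighted norm) is the standard route for the general nonlinear case and matches what the paper's citation alludes to. One detail you glossed over: you need $\Lambda\eta\in C([0,T];X)$ and a pointwise Lipschitz bound $\|\Lambda\eta_1(t)-\Lambda\eta_2(t)\|_X\le C\|\eta_1(t)-\eta_2(t)\|_X$; both follow from the strong monotonicity of $A$ (equivalently of $A^{-1}$) applied to the frozen variational inequality, together with the continuity of $g$, and the latter estimate is what drives the contraction after composition with the history-dependent $\cS$.
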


Theorem \ref{t1}   represents a direct consequence of a result proved in \cite[Ch.3]{SM} while
Theorem \ref{t2}  is a direct consequence of a result proved in \cite[Ch.6]{S}.
Their proofs are based on arguments of convex analysis, monotone operators and a fixed point result for  history-dependent operators.

\medskip\noindent
{\bf Function spaces.} For the contact problem we consider in this paper we introduce some specific notation we shall need in the following sections. First, $\mathbb{S}^d$  stands for the space of second order symmetric
tensors on $\mathbb{R}^d$ with $d\in\{2,3\}$. Moreover, $``\cdot"$ and $\| \cdot\|$  represent the inner product and the Euclidean norm on the spaces  $\mathbb{R}^d$  and  $\mathbb{S}^d$, respectively.
In addition,
$\Omega\subset\mathbb{R}^d$  is a
bounded domain with a
Lipschitz continuous boundary $\Gamma$.
The outward unit normal at $\Gamma$ will be denoted by $\bnu$, and $\Gamma_1$ is a~measurable part of $\Gamma$ with positive measure.

We use the standard notation for the Lebesgue and Sobolev spaces associated to $\Omega$ and $\Gamma$. Typical examples are the spaces
$L^2(\Omega)^d$, $L^2(\Gamma)^d$ and $H^1(\Omega)^d$ equipped with their canonical Hilbertian structure.
For an element $\bv\in H^1(\Omega)^d$ we still write $\bv$ for the trace $\gamma\bv\in L^2(\Gamma)^d$ and $v_\nu$, $\bv_\tau$ for the normal and
tangential traces on the boundary, i.e.,
$v_\nu=\bv\cdot\bnu$ and $\bv_\tau=\bv-v_\nu\bnu$.  Moreover,
$\bvarepsilon(\bv)$ denotes the symmetric
part of the gradient of $\bv$, i.e.,
\[\bvarepsilon(\bv)=\frac{1}{2}\big(\nabla \bv+\nabla^T\bv\big).\]
In addition, for a regular tensor-valued field $\bsigma\colon \Omega\to\mathbb{S}^d$ we shall use $\sigma_\nu$ and $\bsigma_\tau$ for the normal and tangential components of the stress vector $\bsigma\bnu$ on $\Gamma$, i.e., $\sigma_\nu=\bsigma\bnu\cdot\bnu$ and $\bsigma_\tau=\bsigma\bnu-\sigma_\nu\bnu$.

Next, for the displacement field we need the space $V$ and for the stress and strain fields  we need the space $Q$, defined as follows:
\begin{eqnarray*}
&&\label{spV}V=\{\,\bv\in H^1(\Omega)^d:\  \bv =\bzero\ \ {\rm on\ \ }\Gamma_1 \,\},\\
&&\label{spQ}Q=\{\,\bsigma=(\sigma_{ij}):\ \sigma_{ij}=\sigma_{ji} \in L^{2}(\Omega)\ \ \ \forall\, i,\,j=1,\dots,d\,\}.
\end{eqnarray*}
The spaces $V$ and $Q$  are real Hilbert spaces endowed with the inner products
\begin{equation}\label{eeq}
(\bu,\bv)_V= \int_{\Omega}
\bvarepsilon(\bu)\cdot\bvarepsilon(\bv)\,dx,\qquad ( \bsigma,\btau )_Q =
\int_{\Omega}{\bsigma\cdot\btau\,dx}.
\end{equation}
The associated norms on these spaces will be denoted by $\|\cdot\|_{V}$ and $\|\cdot\|_{Q}$, respectively. Recall that the
completeness of the space $(V,\|\cdot\|_{V})$ follows from the
assumption ${meas}\,(\Gamma_1)>0$, which allows the use of Korn's
inequality.
Note also that, by the definition of the inner product in the spaces $V$ and $Q$, we have
\begin{equation}
\|\bv\|_V=\|\bvarepsilon(\bv)\|_Q\ \  \ \ \mbox{for all} \ \ \bv\in V
\label{684}
\end{equation}
and, using the Sobolev
theorem, we deduce that
\begin{equation}\label{trace}
\|\bv\|_{L^2(\Gamma)^d}\le c_{0}\,\|\bv\|_{V}\quad
{\rm for\ all}\  \ \bv \in V.
\end{equation}
Here, $c_{0}$ is a positive constant which depends on $\Omega$ and $\Gamma_1$.

We also use notation ${\bf Q_\infty}$  for the space of fourth order tensor fields defined by
\[{\bf Q_\infty}=\{\, {\cal C}=(c_{ijkl})\  :\
{c}_{ijkl}={c}_{jikl}={c}_{klij} \in L^\infty(\Omega) \ \ \ \forall\, i,\,j,\,k,\,l=1,\dots,d\,\},\] equipped with the norm
\begin{equation*}\label{**}
	\displaystyle \|{\cal{C}}\|_{\bf Q_{\infty}}=\max_{1\le i,j,k,l\le
		d}\|{c}_ {ijkl}\|_{L^{\infty}(\Omega)}. \end{equation*}

We end this section with the following result we shall use in  the rest of the paper.

\begin{lemma}\label{l1n} There exists a linear continuous operator
	$G\colon Q\to V$ such that for any\ $\bo\in Q$ and $\bu\in V$ the following implication hold:
	\begin{equation*}
	\label{50}
	\bo=\bvarepsilon(\bu)\quad\ \Longrightarrow\quad \bu=G\bo. \end{equation*}
\end{lemma}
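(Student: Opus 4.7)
The plan is to exploit the fact that $\bvarepsilon\colon V\to Q$ is an isometric embedding, construct the inverse on its range, and then extend to all of $Q$ by orthogonal projection.

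First, I would observe from identity (\ref{684}) that $\|\bvarepsilon(\bv)\|_Q=\|\bv\|_V$ for every $\bv\in V$. This says that $\bvarepsilon$ is an injective linear isometry from $V$ into $Q$. Consequently, the image $R:=\bvarepsilon(V)$ is a linear subspace of $Q$ which, being the isometric image of the complete space $V$, is itself complete, hence closed in the Hilbert space $Q$. On $R$ one can unambiguously define the inverse map $\bvarepsilon^{-1}\colon R\to V$ which is linear and, again by (\ref{684}), an isometry: $\|\bvarepsilon^{-1}(\bo)\|_V=\|\bo\|_Q$ for all $\bo\in R$.

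Next, since $R$ is a closed subspace of the Hilbert space $(Q,(\cdot,\cdot)_Q)$, the orthogonal projector $P\colon Q\to R$ is a well-defined linear continuous operator with $\|P\|\le 1$. I would then define
\begin{equation*}
G:=\bvarepsilon^{-1}\circ P\colon Q\to V.
\end{equation*}
As a composition of linear continuous maps, $G$ is linear and continuous, with $\|G\bo\|_V=\|P\bo\|_Q\le\|\bo\|_Q$ for all $\bo\in Q$.

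It remains to verify the required implication. Assume $\bo=\bvarepsilon(\bu)$ for some $\bu\in V$. Then $\bo\in R$, hence $P\bo=\bo$, and therefore $G\bo=\bvarepsilon^{-1}(\bo)=\bvarepsilon^{-1}(\bvarepsilon(\bu))=\bu$, which is exactly the claim. There is no serious obstacle here; the only subtle point is the closedness of the range $R=\bvarepsilon(V)$, but this is immediate from the Korn-type identity (\ref{684}) which promotes $\bvarepsilon$ to an isometry on $V$.
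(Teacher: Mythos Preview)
Your argument is correct and matches what the paper intends: the paper does not actually write out a proof but states that it follows from ``standard orthogonality arguments'' as in \cite{Tem}, and your construction via the orthogonal projection onto the closed subspace $\bvarepsilon(V)$ is precisely such an argument. Nothing further is needed.
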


{The proof of Lemma \ref{l1n}  is obtained by standard ortogonality arguments used in various books and surveys and, therefore, we skip it. Such arguments have been used  in \cite{Tem}, for instance, in the study of Navier--Stokes equations.}

%\begin{proof} First, we recall that the range
	%of the deformation operator $\bvarepsilon \colon V \to Q$, denoted by $\bvarepsilon(V)$,
	%is a closed subspace of $Q$. A proof of this result can be found in %\cite[p.212]{SMBOOK}.
	%Denote by $P \colon Q \to \bvarepsilon(V)$ the orthogonal projection operator
	%on $\bvarepsilon(V) \subset Q$ and note that equality (\ref{684}) shows that
	%$\bvarepsilon \colon V \to \bvarepsilon(V)$ is a linear invertible operator.
	%In what follows, we denote by $\bvarepsilon^{-1} \colon \bvarepsilon(V)\to V$
	%the inverse of $\bvarepsilon$. The ingredients
	%above allow us to define the operators $G\colon Q\to V$
	%by equality
	%\begin{eqnarray*}
	%&&G\bo=({\bvarepsilon}^{-1}P\bo)\
	%\ \ \mbox{for all} \ \ \bo\in Q. \label{8G}
	%\end{eqnarray*}
	%It is obvious to see that $G$ is a linear continuous operator and, moreover, the implication (\ref{50}) holds.
%\end{proof}

\section{The viscoelastic contact model}\label{s3}
\setcounter{equation}0

We now describe the mathematical model of contact we consider in this paper. The physical setting is the following: a viscoelastic body occupies, in its reference configuration, a bounded domain $\Omega\subset \real^d$ ($d\in\{2,3\}$), with regular boundary $\partial\Omega=\Gamma$. We assume that
$\Gamma$ is
decomposed into three parts $\overline{\Gamma}_1$,
$\overline{\Gamma}_2$ and $\overline{\Gamma}_3$, with $\Gamma_1$,
$\Gamma_2$ and $\Gamma_3$ being relatively open and mutually
disjoint and, moreover,  the $d-1$ measure of $\Gamma_1$, denoted by ${meas}\,(\Gamma_1)$, is positive. The body is fixed on the  part $\Gamma_1$ of its boundary, is acted upon by body forces and surface tractions on $\Gamma_2$, and is in contact with an obstacle on $\Gamma_3$, the co-called foundation.
As a result, its mechanical state  evolves. To describe its evolution we denote by $[0,T]$ the time interval of interest, where  $T>0$.  Moreover, we use $\bx$ to denote a typical point in $\Omega\cup\Gamma$  and, for simplicity, we sometimes skip the dependence of various functions on the spatial variable $\bx$.
Then, the viscoelastic contact model we consider is as follows.

\medskip\noindent
{\bf Problem} ${\cal P}$. {\it Find a displacement field
	$\bu\colon\Omega\times[0,T] \to\mathbb{R}^d$,
	a stress field $\bsigma\colon\Omega\times [0,T]\to \mathbb{S}^d$  and a strain field $\bo\colon\Omega\times [0,T]\to \mathbb{S}^d$
	such that for any $t\in[0,T]$ the following hold: }
\begin{eqnarray}
	\label{1m}
	\bsigma(t)={\mathcal A}\bo(t)+\int_0^t {\cal B}(t-s)\bo(s)\,ds\quad
	\ &{\rm in}\ &\Omega,\\[1mm]
	\label{1n}
	\bo(t)=\bvarepsilon(\bu(t))
	\ &{\rm in}\ &\Omega,\\[2mm]
	\label{2m} {\rm Div}\,\bsigma(t)+\fb_{0}(t)=\bzero\quad&{\rm in}\ &\Omega,\\[2mm]
	\label{3m} \bu(t)=\bzero\quad  &{\rm on}\  &\Gamma_1,\\[2mm]
	\label{4m} \bsigma(t)\bnu=\fb_2(t)\quad&{\rm on}\ &\Gamma_2, \\ [3mm]
	\label{5m}  \left.\begin{array}{ll} \sigma_\nu(t) =0 \ \ \ \ \ \ \quad\quad&{\rm if}\ \ u_\nu(t) < 0\\[2mm]
	-F\le\sigma_\nu(t) \le 0 \ \ \ &{\rm if}\ \ \   u_\nu(t) = 0\\[2mm]
	\sigma_\nu(t) = -F \ \ \quad\quad &{\rm if}\ \ \ u_\nu(t)>0
\end{array}\right\}\quad
&{\rm on}&\ \Gamma_3, \\[3mm]
	\label{6m} \bsigma_\tau(t)=\bzero\quad&{\rm on}\ &\Gamma_3.
\end{eqnarray}

\medskip
A short description of the equations and boundary conditions in Problem $\cP$ is as follows.
First,  equality (\ref{1m})  is the viscoelastic constitutive law with long memory in which $\cA$ and $\cB$ are the elasticity and the relaxation tensors, respectively. {It was considered  in many books, including
\cite{Dz,DL,Pipkin}. In particular, existence and uniqueness results for displacement-tractions boundary value problems involving such a constitutive law have been considered in \cite{DL}.}
Equality (\ref{1n}) represents the definition of the strain tensor.
Next, equation (\ref{2m}) is the equilibrium equation in which $\fb_0$ denotes the time-dependent density of body forces. We use this equation here since we assume that the mechanical process is quasistatic and, therefore, we neglect
the inertial term in the equation of motion.
The boundary condition (\ref{3m}) is the displacement condition and models the setting when the body is held fixed on the part $\Gamma_1$ of its boundary.  Condition (\ref{4m})
is the traction boundary condition in which $\fb_2$ represents the density of surface tractions which act on $\Gamma_2$, assumed to be time-dependent. Condition (\ref{5m}) describes the contact with a~rigid-plastic foundation. It shows that when there is separation (i.e., when $u_\nu(t)<0$) then the reaction of the foundation vanishes (since $\sigma_\nu(t)=0$); moreover, it shows that penetration arise only if the normal stress reaches the value $F$, which is interpreted as the yield limit of the foundation. More details and mechanical interpretation on {this condition and similar interface laws could be found in \cite[p. 280]{S} and \cite{SM,SMBOOK}, for instance}.  Finally, condition (\ref{6m}) shows that the shear on the contact surface vanishes during the process. We use this condition here since we assume that the contact is frictionless. The case of a frictional contact problem can be considered and treated by using similar arguments, too. Nevertheless its analysis is more difficult since in the frictional case the function $j$ and the set $\Sigma$ we introduce below depend on the solution itself.

{
We end our comments on the model \eqref{1m}--\eqref{6m} with the remark that in the case when the memory term in \eqref{1m} vanishes (i.e., when $\cB\equiv \bzero$), then Problem $\cP$ reduces to a  time-dependent elastic contact problem. 
A comparison between the solution of this elastic problem and the original Problem $\cP$ has been made in  \cite[p. 301--302]{S}, under specific  assumptions.
For the example presented there, it was proved that the memory term does not affect the stress field but, in contrast,  it affects the strain and the displacement field. Moreover, the solution of the elastic contact problem can be obtained  from the solution of the viscoelastic contact problems, in the limit as the relaxation tensor converges to zero. We also mention that a comparison of the numerical solutions for the elastic and viscoelastic problems will be made in Section~6 (see Figure \ref{img::fp}).
}

%\medskip
In the study of Problem $\cP$ we  assume that the viscosity and the  elasticity operators satisfy the following conditions.

\begin{equation}
	\left\{\begin{array}{ll}
		{\rm (a)\ } {\cal A}=(a_{ijkl})\in
		{\bf Q_\infty}.\\ [2mm]
		{\rm (b)\ There\ exists}\ m_{\mathcal A}>0\ {\rm\ such\ that}\\
		{} \qquad  {\cal A}(\bx,\btau)\cdot\btau\ge m_{\mathcal A}
		\|\btau\|^2\ \,
		\forall\,\btau\in \mathbb{S}^d,\ {\rm a.e.}\ \bx \in\Omega.
	\end{array}\right.
	\label{Am}
\end{equation}

\begin{equation}\label{Bm}
	\cB\in C([0,T];{\bf Q_\infty}).
\end{equation}

\noindent
Moreover, the density of applied forces and the yield limit of the foundation have the regularity
\begin{eqnarray}
	&&\label{f0} \fb_0 \in C([0,T];L^2(\Omega)^d).\\[2mm]
	&&\label{f2}\fb_2\in C([0,T];L^2(\Gamma_2)^d). \\[2mm]
	&&\label{Fm} F\in L^2(\Gamma_3),\quad F(\bx)\ge 0\quad{\rm a.e. }\ \ \bx\in\Gamma_3.
\end{eqnarray}

We shall keep assumptions (\ref{Am})--(\ref{Fm}) in the next three sections, even if we do not mention it explicitly. Our main aim there is to provide the variational analysis of  Problem $\cP$, including  existence, uniqueness and convergence results.

\section{Variational formulations}\label{s4}
\setcounter{equation}0

In order to deduce the variational formulations for Problem $\cP$ we introduce  the operators $A\colon V\to V$, $\widetilde{A}\colon Q\to Q$, $\cS\colon C([0,T];V)\to C([0,T];V)$ and  $\widetilde{\cS}\colon C([0,T];Q)\to C([0,T];Q)$
defined by equalities

\begin{eqnarray}
&&\label{MA}(A\bu,\bv)_V =
	\int_{\Omega}{\cal A}\bvarepsilon(\bu)\cdot\bvarepsilon(\bv)\,dx
	\qquad\forall
	\,\bu,\,\bv\in V,\\[3mm]
&&\label{MAt}(\widetilde{A}\bo,\btau)_Q =
	\int_{\Omega}{\cal A}{\bo}\cdot\btau\,dx
	\qquad\forall
	\,\bo,\,\btau\in Q,\\ [3mm]
&&\label{MS}(\cS\bu(t),\bv)_V =
\int_{\Omega}\int_0^t{\cal B}(t-s)\bvarepsilon(\bu(s))\,ds\cdot\bvarepsilon(\bv)\,dx\\[1mm]
&&\qquad\qquad\forall
\,\bu\in C([0,T]; V),\,\bv\in V,\ t\in[0,T],\nonumber\\[3mm]
&&\label{MSt}(\widetilde{\cS}\bo(t),\btau)_Q =
\int_{\Omega}\int_0^t{\cal B}(t-s)\bo(s)\,ds\cdot\btau\,dx\\ [1mm]
&&\qquad\qquad\forall
\,\bo\in C([0,T]; Q), \,\btau\in Q,\ t\in[0,T].\nonumber
\end{eqnarray}

Using the assumptions on the elasticity tensor $\cA$, it is easy to see that $A\colon V\to V$ and $\widetilde{A}\colon Q\to Q$ are linear continuous symmetric and coercive operators.  Moreover,
it is easy to see that $\cS\colon C([0,T];V)\to C([0,T];V)$ and $\widetilde{\cS}\colon C([0,T];Q)\to C([0,T];Q)$ are history-dependent operators.
This allows us to use Theorem \ref{t0} on the space $X=Q$.
Below in this section we denote by $\widetilde{A}^{-1}\colon Q\to Q$ the inverse of the operator $\widetilde{A}$  and we use $\widetilde{\cR}\colon C([0,T];Q)\to C([0,T];Q)$ for the corresponding history-dependent operator.

Next, we consider the functions $j\colon V\to\real$, $\fb\colon [0,T]\to V$ and the set $\Sigma(t)$
defined by equalities
\begin{eqnarray}
	&&\hspace{-8mm}\label{Mj}j(\bv) =
	\int_{\Gamma_3} Fv_\nu^+\,da\qquad\forall
	\,\bv\in V,\\[2mm]
	&&\hspace{-8mm}\label{Mf}(\fb(t),\bv)_V = \int_{\Omega}\fb_0(t)\cdot\bv\,dx+
	\int_{\Gamma_2}\fb_2(t)\cdot\bv\,da\qquad\forall\,\bv\in V,\ t\in[0,T]\\[3mm]
	&&\hspace{-8mm}\label{Msm}\Sigma(t)=\{\,\btau\in Q\, :\, (\btau,\bvarepsilon(\bv))_Q
	+j(\bv)\ge (\fb(t),\bv)_V\ \ \forall\, \bv\in V \,\}\ \forall\, t\in[0,T].
\end{eqnarray}
Note that in (\ref{Mj}) and below, we use notation $r^+$ for the positive part of $r\in\mathbb{R}$, that is,  $r^+={\rm max} \,\{r,0\}$. Therefore, $j$ is a positively homogeneous function, i.e., $j(\lambda\bv)=\lambda j(\bv)$ for each $\lambda>0$ and $\bv\in V$.

Assume now that  $(\bu,\bsigma,\bo)$ are sufficiently regular functions which satisfy Problem~$\cP$.
We use (\ref{3m}), (\ref{1m}) and (\ref{1n}) to see that
\begin{equation}\label{MxA}
	\bu(t)\in V,\qquad\bsigma(t)\in Q,\qquad\bo(t)\in Q\qquad\forall\, t\in[0,T].
\end{equation}

Let $\bv\in V$ and $t\in[0,T]$. Then, using standard arguments based on integration by parts we deduce that
\begin{eqnarray*}
	&&\label{Mx1}\hspace{-6mm}
	\int_\Omega\,\bsigma(t)\cdot(\bvarepsilon(\bv)-\bvarepsilon(\bu(t)))\,dx+
	\int_{\Gamma_3}Fv_\nu^+\,da-\int_{\Gamma_3}Fu_\nu^+(t)\,da\\
	&&\qquad\qquad\ge\int_{\Omega}\fb_0(t)\cdot(\bv-\bu(t))\,dx+\int_{\Gamma_2}\fb_2(t)\cdot(\bv-\bu(t))\,da.\nonumber
\end{eqnarray*}

\noindent
Next, we use notation
(\ref{Mj}) and (\ref{Mf}) to deduce that
\begin{equation}\label{Mx2}
	(\bsigma(t),\bvarepsilon(\bv)-\bvarepsilon
	(\bu(t)))_Q+j(\bv)-j(\bu(t))\ge (\fb(t),\bv-\bu(t))_V.
\end{equation}
We now use the constitutive law (\ref{1m}) and notation  (\ref{MA}), (\ref{MS}) to see that
\begin{equation}\label{Mx2n}
(\bsigma(t),\bvarepsilon(\bv)-\bvarepsilon(\bu(t)))_Q=(A\bu(t),\bv-\bu(t))_V+(\cS\bu(t),\bv-\bu(t))_V.
\end{equation}
Therefore, substituting (\ref{Mx2n}) in (\ref{Mx2}) and using (\ref{MxA}) we deduce the following variational formulation of the contact Problem $\cP$ in terms of displacement.

\medskip\noindent
{\bf Problem} ${\cal P}^V_1$. {\it Find a displacement field $\bu\in C([0,T];V)$
such that for all $t\in[0,T]$ the following inequality holds:}
\begin{eqnarray}\label{Mx8}
&&(A\bu(t),\bv-\bu(t))_V+(\cS\bu(t),\bv-\bu(t))_V+j(\bv)-j(\bu(t))\\ [2mm]
&&\qquad\qquad\ge (\fb(t),\bv-\bu(t))_V\qquad\forall\, \bv\in V.\nonumber
\end{eqnarray}

We now consider the following two variational formulations of Problem
$\cP$, in terms of the stress and strain field, respectively.

\medskip\noindent
{\bf Problem} ${\cal P}^V_2$. {\it Find a stress field $\bsigma\in C([0,T];Q)$
	such that for all $t\in[0,T]$ the following inequality holds:}
\begin{equation}\label{Mx9}
	\bsigma(t)\in \Sigma(t),\quad (\widetilde{A}^{-1}\bsigma(t),\btau-\bsigma(t))_Q+(\widetilde{\cR}\bsigma(t),\btau-\bsigma(t))_Q\ge 0\quad\ \forall\, \btau\in \Sigma(t).
\end{equation}

\medskip\noindent
{\bf Problem} ${\cal P}^V_3$. {\it Find a strain field $\bo\in C([0,T];Q)$
	such that  for all $t\in[0,T]$ the following inclusion holds:}
\begin{equation}\label{Mx10}
	-\bo(t)\in N_{\Sigma(t)}(\widetilde{A}\bo(t)+\widetilde{\cS}\bo(t)).
\end{equation}

Note that inequality (\ref{Mx9}) and inclusion
(\ref{Mx10}) can be derived directly from the statement of the contact Problem $\cP$. Nevertheless, to avoid repetitions we do not provide this derivation, and we restrict ourselves to mention that Problems ${\cal P}^V_2$ and ${\cal P}^V_3$ are fully justified by the following results.

\begin{proposition}\label{p1} Let $\bu$ be a solution of Problem ${\cal P}^V_1$ and let $\bsigma\colon [0,T]\to Q$ be the function defined by equality
\begin{equation}\label{v1}
\bsigma(t)=\widetilde{A}\bvarepsilon(\bu(t))+\widetilde{\cS}\bvarepsilon(\bu(t))\quad\forall\, t\in[0,T].
\end{equation}
Then $\bsigma$ is a solution of Problem ${\cal P}^V_2$.
\end{proposition}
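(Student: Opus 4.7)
The plan is to show (i) that $\bsigma(t)$ as defined lies in $\Sigma(t)$ and (ii) that it satisfies the variational inequality in Problem $\mathcal{P}^V_2$. The bridge between the two formulations will be Theorem \ref{t0} applied on $X=Q$: since $\widetilde A$ is linear, continuous, coercive and $\widetilde{\cS}$ is history-dependent, the operator $\widetilde A^{-1}+\widetilde{\cR}$ is exactly the inverse of $\widetilde A+\widetilde{\cS}$. Applied to \eqref{v1} this yields the crucial identity
\begin{equation*}
\widetilde A^{-1}\bsigma(t)+\widetilde{\cR}\bsigma(t)=\bvarepsilon(\bu(t))\qquad\forall\,t\in[0,T].
\end{equation*}
Hence verifying \eqref{Mx9} reduces to proving $(\bvarepsilon(\bu(t)),\btau-\bsigma(t))_Q\ge 0$ for every $\btau\in\Sigma(t)$. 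I will also use repeatedly the identity $(\bsigma(t),\bvarepsilon(\bw))_Q=(A\bu(t),\bw)_V+(\cS\bu(t),\bw)_V$, which follows directly from comparing \eqref{MA}--\eqref{MSt} with \eqref{v1}.

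For step (i), the key observation is that $j$ is positively homogeneous by \eqref{Mj} and, since $r\mapsto r^+$ is subadditive, also subadditive. I would plug $\bv=\bu(t)+\lambda\bw$ (with $\lambda>0$ and $\bw\in V$ arbitrary) into the displacement inequality \eqref{Mx8}, use subadditivity and homogeneity to bound $j(\bu(t)+\lambda\bw)-j(\bu(t))\le\lambda j(\bw)$, divide by $\lambda$, and translate the result via the identity above into
\begin{equation*}
(\bsigma(t),\bvarepsilon(\bw))_Q+j(\bw)\ge(\fb(t),\bw)_V\qquad\forall\,\bw\in V,
\end{equation*}
which is exactly the membership $\bsigma(t)\in\Sigma(t)$ by \eqref{Msm}.

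Step (ii) rests on an ``energy equality'' obtained by testing \eqref{Mx8} with the two admissible choices $\bv=\bzero$ and $\bv=2\bu(t)$. The first gives the inequality $(A\bu(t),\bu(t))_V+(\cS\bu(t),\bu(t))_V+j(\bu(t))\le(\fb(t),\bu(t))_V$, while the second combined with $j(2\bu(t))=2j(\bu(t))$ gives the reverse inequality, so the two are equal. Rewriting the left-hand side through the identity above produces
\begin{equation*}
(\bsigma(t),\bvarepsilon(\bu(t)))_Q=(\fb(t),\bu(t))_V-j(\bu(t)).
\end{equation*}
On the other hand, for any $\btau\in\Sigma(t)$, testing the definition \eqref{Msm} with $\bv=\bu(t)$ yields $(\btau,\bvarepsilon(\bu(t)))_Q\ge(\fb(t),\bu(t))_V-j(\bu(t))$. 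Subtracting the two displays gives $(\bvarepsilon(\bu(t)),\btau-\bsigma(t))_Q\ge 0$, which, in view of the inversion identity at the start, is precisely \eqref{Mx9}.

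The main obstacle I foresee is conceptual rather than technical: one must recognize the role of the positive homogeneity of $j$ in turning the displacement variational inequality into an equality on the ``radial'' direction $\bu(t)$, and into the one-sided constraint that defines $\Sigma(t)$. Once this mechanism is isolated, the remaining manipulations are just bookkeeping with the operators defined in \eqref{MA}--\eqref{MSt} and an appeal to Theorem \ref{t0}.
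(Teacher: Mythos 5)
Your proof is correct and follows essentially the same route as the paper: inversion of $\widetilde{A}+\widetilde{\cS}$ via Theorem \ref{t0}, the identity relating $(\bsigma(t),\bvarepsilon(\cdot))_Q$ to $A+\cS$, and the energy equality obtained by testing \eqref{Mx8} with $\bv=\bzero$ and $\bv=2\bu(t)$, leading to $(\btau-\bsigma(t),\bvarepsilon(\bu(t)))_Q\ge 0$ and hence \eqref{Mx9}. The only (harmless) variation is that you establish $\bsigma(t)\in\Sigma(t)$ by the perturbation $\bv=\bu(t)+\lambda\bw$ together with subadditivity and positive homogeneity of $j$, whereas the paper simply adds the energy equality to the transformed inequality; the two arguments are equivalent in substance.
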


\begin{proof} The regularity $\bsigma\in C([0,T];Q)$ is obvious. Moreover, using (\ref{v1})
and Theorem \ref{t0} we deduce that
\begin{equation}\label{v1n}
\bvarepsilon(\bu(t))=	\widetilde{A}^{-1}\bsigma(t)+\widetilde{\cR}\bsigma(t)\quad\forall\, t\in[0,T].
\end{equation}

Let $\bv\in V$ and $t\in[0,T]$.
We use definitions (\ref{MA})--(\ref{MSt}) and (\ref{v1}) to see that
\[(A\bu(t),\bv-\bu(t))_V+(\cS\bu(t),\bv-\bu(t))_V=(\bsigma(t),\bvarepsilon(\bv)-\bvarepsilon(\bu(t)))_Q\]
and, therefore, (\ref{Mx8}) implies that
\begin{equation}\label{v2}
(\bsigma(t),\bvarepsilon(\bv)-\bvarepsilon(\bu(t)))_Q+j(\bv)-j(\bu(t))\ge (\fb(t),\bv-\bu(t))_V\qquad\forall\, \bv\in V.
\end{equation}
We now test in  (\ref{v2}) with  $\bv=2\bu(t)$ and $\bv=\bzero_{V}$ to see that
	\begin{equation}\label{v3}
		(\bsigma(t),\bvarepsilon(\bu(t)))_Q+j(\bu(t))=(\fb(t),\bu(t))_V.
	\end{equation}
	Therefore, using (\ref{v2}) and (\ref{v3}) we find that
	\[
	(\bsigma(t),\bvarepsilon(\bv))_Q+j(\bv)\ge (\fb(t),\bv)_V.\]
	This inequality combined with definition (\ref{Msm}) implies
	that
	\begin{equation*}\label{v4}
		\bsigma\in \Sigma(t).
	\end{equation*}
	To proceed, we use (\ref{Msm}), (\ref{MxA}) and (\ref{v3}) to see that
	\begin{eqnarray*}
		(\btau-\bsigma(t),\bvarepsilon(\bu(t)))_Q\ge 0 \qquad\forall\, \btau\in \Sigma(t)
	\end{eqnarray*}
	and, using (\ref{v1n})  we find that
	\begin{equation*}\label{Mx5}
		(\btau-\bsigma(t),\widetilde{A}^{-1}\bsigma(t)+\widetilde{\cR}\bsigma(t))_Q\ge 0 \qquad\forall\, \btau\in \Sigma(t).
	\end{equation*}
This shows that $\bsigma$ is a solution to Problem $\cP_2^V$, which concludes the proof.
\end{proof}

\begin{proposition}\label{p2} Let $\bsigma$ be a solution of Problem ${\cal P}^V_2$ and let $\bo\colon [0,T]\to Q$ be the function defined by equality
\begin{equation}\label{v1m}
	\bo(t)=\widetilde{A}^{-1}\bsigma(t)+\widetilde{\cR}\bsigma(t)\quad\forall\, t\in[0,T].
\end{equation}
Then $\bo$ is a solution of Problem ${\cal P}^V_3$.
\end{proposition}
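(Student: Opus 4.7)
The plan is to invert the identity defining $\bo$ via Theorem \ref{t0}, substitute everything into the inequality of Problem $\cP_2^V$, and recognise the resulting expression as the normal cone condition using the characterisation \eqref{5}. The argument is essentially a direct dualisation, and there is no serious obstacle beyond bookkeeping.

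First I would verify the regularity. Since $\bsigma \in C([0,T];Q)$ and both $\widetilde{A}^{-1}\colon Q\to Q$ (linear continuous) and $\widetilde{\cR}\colon C([0,T];Q)\to C([0,T];Q)$ (history-dependent, hence continuous) preserve the space $C([0,T];Q)$, the function $\bo$ defined by \eqref{v1m} belongs to $C([0,T];Q)$.

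Next, the key algebraic step. By Theorem \ref{t0} applied on $X=Q$, the operator $\widetilde{A}^{-1}+\widetilde{\cR}$ is the inverse of $\widetilde{A}+\widetilde{\cS}$. Therefore \eqref{v1m} is equivalent to
\begin{equation*}
\bsigma(t)=\widetilde{A}\bo(t)+\widetilde{\cS}\bo(t)\qquad\forall\,t\in[0,T].
\end{equation*}
Substituting this into the membership $\bsigma(t)\in\Sigma(t)$ and into the inequality in \eqref{Mx9} (where the left factor $\widetilde{A}^{-1}\bsigma(t)+\widetilde{\cR}\bsigma(t)$ equals $\bo(t)$ by definition) yields, for every $t\in[0,T]$,
\begin{equation*}
\widetilde{A}\bo(t)+\widetilde{\cS}\bo(t)\in\Sigma(t),\qquad \bigl(\bo(t),\btau-(\widetilde{A}\bo(t)+\widetilde{\cS}\bo(t))\bigr)_Q\ge 0\quad\forall\,\btau\in\Sigma(t).
\end{equation*}

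Finally, rewriting the last inequality as $(-\bo(t),\btau-(\widetilde{A}\bo(t)+\widetilde{\cS}\bo(t)))_Q\le 0$ for all $\btau\in\Sigma(t)$ and invoking the characterisation \eqref{5} of the outward normal cone with $u=\widetilde{A}\bo(t)+\widetilde{\cS}\bo(t)$ and $f=-\bo(t)$, together with the fact that $\Sigma(t)=\Sigma_0+g(t)$ is a closed convex subset of $Q$, we conclude
\begin{equation*}
-\bo(t)\in N_{\Sigma(t)}\bigl(\widetilde{A}\bo(t)+\widetilde{\cS}\bo(t)\bigr)\qquad\forall\,t\in[0,T],
\end{equation*}
which is exactly \eqref{Mx10}. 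Hence $\bo$ solves Problem $\cP_3^V$, completing the proof.
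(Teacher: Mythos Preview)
Your proof is correct and follows essentially the same route as the paper: invert \eqref{v1m} via Theorem~\ref{t0} to obtain $\bsigma(t)=\widetilde{A}\bo(t)+\widetilde{\cS}\bo(t)$, substitute into \eqref{Mx9}, and recognise the resulting membership-plus-inequality as the normal cone condition \eqref{5}. The paper's argument is identical in structure, only slightly terser on the regularity and convexity remarks.
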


\begin{proof}
The regularity $\bo\in C([0,T];Q)$ is obvious. Moreover, using (\ref{v1m}) and  Theorem \ref{t0}
we find that
\begin{equation}\label{v1q}
	\bsigma(t)=\widetilde{A}\bo(t)+\widetilde{\cS}
	\bo(t)\quad\forall\, t\in[0,T].
\end{equation}

Let $t\in[0,T]$. Then, using (\ref{Mx9}), (\ref{v1m}) and (\ref{v1q}) we obtain that
\begin{equation}\label{Mx9n}
	\widetilde{A}\bo(t)+\widetilde{\cS}
	\bo(t)\in \Sigma(t),\quad (\widetilde{A}\bo(t)+\widetilde{\cS}
	\bo(t)-\btau,\bo(t))_Q\le 0\quad\ \forall\, \btau\in \Sigma(t).
\end{equation}
Then, with  notation $N_{\Sigma(t))}$ for the outward normal cone of  the set $\Sigma(t)\subset Q$, equivalence (\ref{5}) and inequality (\ref{Mx9n}) imply that
\begin{equation*}
	-\bo(t)\in N_{\Sigma(t)}(\widetilde{A}\bo(t)+\widetilde{\cS}\bo(t)).
\end{equation*}
We conclude from here that $\bo$ is a solution to Problem $\cP_3^V$, which ends the proof.
\end{proof}

\begin{proposition}\label{p3} Let $\bo$ be a solution of Problem ${\cal P}^V_3$. Then there exists a function  $\bu\colon [0,T]\to V$ such that
\begin{equation}\label{v1p}
		\bo(t)=\bvarepsilon(\bu(t)) \quad\forall\, t\in[0,T].
	\end{equation}
Moreover, $\bu$ is a solution of Problem ${\cal P}^V_1$.

\end{proposition}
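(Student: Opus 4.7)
My plan proceeds in two stages. Throughout, set $\bsigma(t):=\widetilde{A}\bo(t)+\widetilde{\cS}\bo(t)$; by (\ref{5}), the inclusion (\ref{Mx10}) is equivalent to $\bsigma(t)\in \Sigma(t)$ together with the variational inequality $(\bo(t), \btau-\bsigma(t))_Q\ge 0$ for all $\btau\in\Sigma(t)$.

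\textbf{Step 1 (recovery of $\bu$).} First I would show that $\bo(t)\in\bvarepsilon(V)$. For any $\btau^{\perp}$ in the orthogonal complement of $\bvarepsilon(V)$ in $Q$ and any $s\in\mathbb{R}$, the translate $\bsigma(t)+s\btau^{\perp}$ still lies in $\Sigma(t)$, because the defining inequality of $\Sigma(t)$ only involves elements of the form $\bvarepsilon(\bv)$ on the $Q$-side. Substituting this translate into the VI above and letting $s$ vary yields $(\bo(t), \btau^{\perp})_Q=0$. Since $\bvarepsilon\colon V\to Q$ is an isometry by (\ref{684}), its range is closed, so $\bo(t)\in\bvarepsilon(V)$. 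Lemma \ref{l1n} then provides $\bu(t):=G\bo(t)\in V$ with $\bo(t)=\bvarepsilon(\bu(t))$, proving (\ref{v1p}); continuity of $G$ transfers the regularity $\bo\in C([0,T];Q)$ to $\bu\in C([0,T];V)$.

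\textbf{Step 2 ($\bu$ solves Problem ${\cal P}^V_1$).} Writing $\bsigma(t)=\widetilde{A}\bvarepsilon(\bu(t))+\widetilde{\cS}\bvarepsilon(\bu(t))$ and unwinding the definitions (\ref{MA})--(\ref{MSt}), the condition $\bsigma(t)\in\Sigma(t)$ reads
\[
(A\bu(t),\bv)_V+(\cS\bu(t),\bv)_V+j(\bv)\ge (\fb(t),\bv)_V\qquad\forall\,\bv\in V.
\]
It suffices to upgrade this inequality to equality at $\bv=\bu(t)$; subtracting the resulting complementarity identity from the generic case then produces (\ref{Mx8}). The ``$\ge$'' direction is immediate. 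For ``$\le$'', introduce $K^{\ast}=\{\bxi\in V\,:\,(\bxi,\bv)_V\le j(\bv)\ \ \forall\,\bv\in V\}$ and note that for every $\bxi\in K^{\ast}$ the tensor $\btau_{\bxi}:=\bvarepsilon(\fb(t)-\bxi)$ lies in $\Sigma(t)$, since $(\btau_{\bxi},\bvarepsilon(\bv))_Q+j(\bv)-(\fb(t),\bv)_V=j(\bv)-(\bxi,\bv)_V\ge 0$. Plugging $\btau_{\bxi}$ into the VI yields $(\fb(t),\bu(t))_V-(\bxi,\bu(t))_V\ge (\bsigma(t),\bvarepsilon(\bu(t)))_Q$, and taking the supremum over $\bxi\in K^{\ast}$, combined with the support-function identity $\sup_{\bxi\in K^{\ast}}(\bxi,\bu(t))_V=j(\bu(t))$ (Fenchel biconjugation for the positively homogeneous, convex, lower-semicontinuous function $j$), furnishes the desired reverse inequality.

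The main obstacle is Step 2: a normal-cone inclusion does not on its face encode the complementarity identity required for the displacement VI, and the conversion hinges on the explicit construction $\btau_{\bxi}=\bvarepsilon(\fb(t)-\bxi)$ with $\bxi$ sweeping $K^{\ast}=\partial j(\bzero)$. This passage crucially exploits the positive homogeneity of $j$ highlighted after (\ref{Msm}); without it the identification $\sup_{\bxi\in K^{\ast}}(\bxi,\bu(t))_V=j(\bu(t))$ would fail and the argument would stall.
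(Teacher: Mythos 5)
Your proposal is correct, and its skeleton matches the paper's: your Step 1 (orthogonality of $\bo(t)$ to $\bvarepsilon(V)^\perp$ obtained by translating $\bsigma(t)$ within $\Sigma(t)$, closedness of $\bvarepsilon(V)$, then Lemma \ref{l1n} for the recovery and continuity of $\bu$) is essentially the argument given in the paper, and your Step 2 reduces, as the paper does, to the complementarity identity $(\bsigma(t),\bvarepsilon(\bu(t)))_Q+j(\bu(t))=(\fb(t),\bu(t))_V$ tested against elements of the form $\bvarepsilon(\fb(t)-\bxi)$. The one genuine divergence is how that identity is produced. The paper fixes a single subgradient $\bxi(t)\in\partial j(\bu(t))$, forms the single test element $\btau_0(t)=\bvarepsilon(\fb(t)-\bxi(t))$, verifies $\btau_0(t)\in\Sigma(t)$ by testing an auxiliary inequality with $\bv=2\bu(t)$ and $\bv=\bzero_V$ (this is where positive homogeneity of $j$ enters), and then inserts $\btau_0(t)$ into the variational inequality. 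You instead sweep the whole set $K^\ast=\partial j(\bzero)$, for which membership $\btau_{\bxi}\in\Sigma(t)$ is a one-line computation, and recover $j(\bu(t))$ as the support function $\sup_{\bxi\in K^\ast}(\bxi,\bu(t))_V$ by Fenchel biconjugation, homogeneity entering there instead. The two routes are close relatives: for a positively homogeneous convex function, $\bxi\in\partial j(\bu)$ exactly when $\bxi\in\partial j(\bzero)$ and $(\bxi,\bu)_V=j(\bu)$, so the paper's $\bxi(t)$ is precisely an element of $K^\ast$ attaining your supremum. What your version buys is a shorter membership check and an explicit localization of where homogeneity is used; what the paper's version buys is that it never invokes duality machinery, staying at the level of the definition of a subgradient and elementary testing.
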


\begin{proof}  Let $\bsigma\colon [0,T]\to Q$ be the function defined by (\ref{v1q}) and let $t\in[0,T]$ be fixed. Then, using (\ref{Mx10}) and equivalence (\ref{5}) we deduce that
\begin{equation}\label{v12}
\bsigma(t)\in\Sigma(t),\qquad(\btau-\bsigma(t),\bo(t))_Q\ge 0 \qquad\forall\, \btau\in \Sigma(t).
\end{equation}
Next, consider an element $\bz\in Q$ such that
\begin{equation}\label{v13}
(\bz,\bvarepsilon(\bv))_Q=0\qquad\forall\bv\in V.
\end{equation}
Then, the definition (\ref{Msm}) implies that	$\bsigma(t)\pm \bz\in\Sigma(t)$ and, testing with $\btau=\bsigma(t)\pm \bz$ in (\ref{v12})
we find that
\begin{equation}\label{v14}	(\bo(t),\bz)_Q=0.
\end{equation}
Equalities (\ref{v13}) and (\ref{v14}) show that $\bo(t)\in (\bvarepsilon(V)^\perp)^\perp$	 where $M^\perp$ denotes the orthogonal of the set $M$ in the Hilbertian  structure of the space $Q$. Now, since $\bvarepsilon(V)$ is a closed subspace of $Q$ we have $(\bvarepsilon(V)^\perp)^\perp=\bvarepsilon(V)$.
We conclude from here that $\bo(t)\in \bvarepsilon(V)$ which shows that there exists an element $\bu(t)\in V$ such that (\ref{v1p}) holds.

It is easy to see that the function $t\mapsto \bu(t):[0,T]\to V$ defined above is continuous, i.e., $\bu\in C([0,T];V)$. We now prove that $\bu$ satisfies inequality (\ref{Mx8}).
To this end, let $t\in[0;T]$. We combine (\ref{v1p}) and (\ref{v12})
to see that
\begin{equation}\label{M22n}
	(\btau-\bsigma(t),\bvarepsilon(\bu(t)))_Q\ge 0\qquad\forall\,\btau\in\Sigma(t).
\end{equation}
Recall now that the function $j\colon V\to\mathbb{R}$ is subdifferentible on $V$, which
allows us to consider an element $\bxi(t)\in V$ such that
\begin{equation}\label{v13n}
j(\bv)-j(\bu(t))\ge (\bxi(t),\bv-\bu(t))_V\qquad\forall\,\bv\in V.
\end{equation}
Let $\btau_0(t)=\bvarepsilon(\fb(t)-\bxi(t))\in Q$.
Then using (\ref{eeq}) and (\ref{v13n})
it is easy to see that
\begin{equation}\label{v14n}
	(\btau_0(t),\bvarepsilon(\bv)-\bvarepsilon(\bu(t)))_Q+j(\bv)-j(\bu(t))\ge (\fb(t),\bv-\bu(t))_V\quad\forall\,\bv \in V.
\end{equation}
We now test in  (\ref{v14n}) with  $\bv=2\bu(t)$ and $\bv=\bzero_{V}$ to see that
\begin{equation}\label{v15}
	(\btau_0(t),\bvarepsilon(\bu(t)))_Q+j(\bu(t))=
	(\fb(t),\bu(t))_V.
\end{equation}
Therefore, using (\ref{v14n}) and (\ref{v15}) we find that
\[
(\btau_0(t),\bvarepsilon(\bv))_Q+j(\bv)\ge (\fb(t),\bv)_V\qquad\forall\,\bv\in V\]
which implies that $\btau_0(t)\in \Sigma(t)$. This regularity allows us to test with $\btau=\btau_0(t)$ in (\ref{M22n})
in order to see  that
\begin{equation*}%\label{Mx4n}
	(\btau_0(t),\bvarepsilon(\bu(t)))_Q+j(\bu(t))\ge (\bsigma(t),\bvarepsilon(\bu(t)))_Q+j(\bu(t))
\end{equation*}
and, using (\ref{v15}), we deduce that
\begin{equation}\label{Mx4n}
	(\fb(t),\bu(t))_V\ge (\bsigma(t),\bvarepsilon(\bu(t)))_Q+j(\bu(t)).
\end{equation}
On the other hand, since $\bsigma(t)\in\Sigma(t)$ we find that
\begin{equation}\label{Mx4m}
	(\bsigma(t),\bvarepsilon(\bv))_Q+j(\bv)\ge (\fb(t),\bv)_V\quad\forall\,\bv\in V
\end{equation}
which, in particular, implies that
\begin{equation}\label{Mx4q}
	(\bsigma(t),\bvarepsilon(\bu(t)))_Q+
	j(\bu(t))\ge (\fb(t),\bu(t))_V\quad\forall\,\bv\in V.
\end{equation}
We now combine inequalities  (\ref{Mx4n}) and (\ref{Mx4q}) to obtain that
\begin{equation}\label{Mx4p}
	(\bsigma(t),\bvarepsilon(\bu(t)))_Q+j(\bu(t))=
	(\fb(t),\bu(t))_V\quad\forall\,\bv\in V,
\end{equation}
then we use  (\ref{Mx4m}) and  (\ref{Mx4p}) to find that (\ref{Mx2}) holds, for each $\bv\in V$. %Moreover, note that (\ref{v1q}) and definitions (\ref{MAt}), (\ref{MA}) of the operators $\widetilde{A}$ and $A$, respectively, imply that (\ref{Mx8}) holds, too.
Moreover, we observe that (\ref{v1q}), (\ref{v1p}) and definitions (\ref{MA}) -- (\ref{MSt}) of the operators $A$, $\widetilde{A}$, $\cS$ and $\widetilde{\cS}$, respectively, imply that
\begin{equation*}
(\bsigma(t),\bvarepsilon(\bv)-\bvarepsilon(\bu(t)))_Q=(A\bu(t)+\cS\bu(t),\bv-\bu(t))_V.
\end{equation*}
We substitute this equality in (\ref{Mx2}) and deduce that (\ref{Mx8}) holds.
This implies that $\bu$ is a solution of Problem $\cP_1^V$ and concludes the proof.
\end{proof}

{We now end this section with two remarks concerning the variational problems $\cP_1^V$, $\cP_2^V$ and $\cP_3^V$. The first one (Remark \ref{r3}
below) is mathematical in nature; the second one (Remark \ref{r4}) is mechanical in nature.
}

\begin{remark}\label{r3}
	We now follow \cite{RS1,S} to recall the following definition: two abstract Problems ${\cal P}$ and ${\cal Q}$ defined on the normed spaces $X$ and $Y$, respectively,  are said to be {\rm dual of each other} if there exists an operator $D\colon X\to Y$ such that:

\begin{enumerate}
  \item[\rm (a)]$D$ is bijective;
  
  \item[\rm (b)] Both $D\colon X\to\ Y$ and its inverse $D^{-1}\colon Y\to X$  are continuous;
  
  \item[\rm (c)] $u\in X$ is a solution of Problem ${\cal P}$ if and only if $\sigma:=Du\in Y$ is a solution of Problem ${\cal Q}$.
\end{enumerate}

\noindent
Then, it follows from Propositions $\ref{p1}$--$\ref{p3}$ that Problems $\cP_1^V$, $\cP_2^V$ and $\cP_3^V$ are pairwise dual of each other.
\end{remark}

{\begin{remark}\label{r4}
The arguments presented at the beginning of this section show that if the triple ($\bu,\bsigma, \bo$) represents a solution to Problem~$\cP$, then $\bu$ is a solution of Problem~$\cP_1^V$. This allows us to consider Problem $\cP^V_1$ as a (first) variational formulation of the contact Problem $\cP$.
Moreover,  notations \eqref{MAt} and  \eqref{MSt}  show that equality \eqref{v1} is equivalent with the constitutive law
\begin{equation}\label{v1qz}
	\bsigma(t)={\cA}\bvarepsilon(\bu(t))+\int_0^t{\cB}(t-s)
	\bvarepsilon(\bu(s))\,ds\quad\forall\, t\in[0,T]
\end{equation}
which, obviously, represents a consequence of equalities \eqref{1m} and \eqref{1n}.
Therefore, Proposition $\ref{p1}$ shows that if $\bu$ is a solution of Problem $\cP_1^V$ and $\bsigma$ is defined by the constitutive law $\eqref{v1qz}$, then $\bsigma$ is a solution of Problem $\cP_2^V$. This allows us to consider Problem $\cP^V_2$ as a (second) variational formulation of the contact Problem~$\cP$.
Next,  Theorem  $\ref{t0}$ and equality \eqref{v1p} imply that notation \eqref{v1m} is equivalent with the constitutive law \eqref{v1} which, in turn, is equivalent
with the constitutive law \eqref{v1qz}, as proved above.
Therefore, Proposition $\ref{p3}$ shows that if $\bsigma$ is a solution of Problem~$\cP_2$ and $\bo$ is defined by $\eqref{v1m}$, then $\bo$ is a solution of Problem $\cP_3^V$. This allows us to consider Problem $\cP^V_3$ as a (third) variational formulation of the contact Problem~$\cP$.
The arguments above provide the legitimacy of weak formulations  $\cP^V_1$, $\cP_V^2$ and $\cP^V_3$. These formulations are expressed in terms of different unknowns and have a different structure. Nevertheless, each one can be considered as a variational formulation of the original contact problem $\cP$. 
\end{remark} 
}

\section{Weak solvability}\label{s5}
\setcounter{equation}0

In this section we turn to the solvability of the variational Problems $\cP_1^V$, $\cP_2^V$ and $\cP_3^V$. Our main result on this matter is the following.

\begin{theorem}\label{Mt2}
	Assume  $(\ref{Am})$--$(\ref{Fm})$. Then, Problems $\cP_1^V$, $\cP_2^V$ and $\cP_3^V$ have a unique solution. Moreover, the solution depends  Lipschitz continuously on the data $(F,\fb)\in L^2(\Gamma_3)\times C([0,T];V)$.
\end{theorem}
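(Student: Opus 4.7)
The plan is to prove existence and uniqueness at the level of Problem $\cP_1^V$ by invoking Theorem \ref{t1}, then transfer the conclusion to $\cP_2^V$ and $\cP_3^V$ via the cycle already built in Propositions \ref{p1}--\ref{p3}. For the stability statement I would work again at the level of $\cP_1^V$ and propagate the estimate to the stress and strain fields through the defining equalities $(\ref{v1})$ and $(\ref{v1p})$.

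To apply Theorem \ref{t1} I specialize it to $X=V$ and $K=V$. Assumption $(\mathcal{K})$ is then trivial. The bound $(\ref{Am})$(a) makes $A$ linear and Lipschitz continuous, while $(\ref{Am})$(b) together with the identity $(\ref{684})$ gives strong monotonicity with constant $m_\cA$, so $(\mathcal{A})$ holds. Assumption $(\mathcal{S})$ follows from $\cB\in C([0,T];{\bf Q_\infty})$: with $L=\max_{t\in[0,T]}\|\cB(t)\|_{{\bf Q_\infty}}$ one reads off Definition \ref{d0} directly from $(\ref{MS})$. Assumption $(j)$ is clear because $j$ in $(\ref{Mj})$ is the composition of the continuous trace $(\ref{trace})$, the positive-part map, and integration against $F\ge 0$, hence convex and continuous on $V$. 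Finally, $(\ref{f0})$--$(\ref{f2})$ combined with the definition $(\ref{Mf})$ yield $\fb\in C([0,T];V)$, giving $(f)$. Theorem \ref{t1} then provides a unique $\bu\in C([0,T];V)$ satisfying $(\ref{Mx8})$. Propositions \ref{p1}--\ref{p2} deliver, starting from this $\bu$, solutions $\bsigma$ of $\cP_2^V$ and $\bo$ of $\cP_3^V$, while uniqueness for $\cP_2^V$ and $\cP_3^V$ is obtained by running the cycle backwards: a second solution of $\cP_2^V$ (resp.\ $\cP_3^V$) would, via Propositions \ref{p2}--\ref{p3}, produce a second solution of $\cP_1^V$, contradicting what was just proved. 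An equivalent route would be to apply Theorem \ref{t2} directly to $\cP_3^V$ after rewriting $\Sigma(t)=\Sigma_0+\bvarepsilon(\fb(t))$, where $\Sigma_0$ corresponds to $\fb\equiv\bzero$.

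For the Lipschitz estimate, let $(F_i,\fb_i)$ with $i\in\{1,2\}$ be two data and $\bu_i$ the associated solutions of $(\ref{Mx8})$ with $j_i$ defined by $(\ref{Mj})$. Testing the $i=1$ inequality at $\bv=\bu_2(t)$ and the $i=2$ inequality at $\bv=\bu_1(t)$, then summing and rearranging, gives
\begin{eqnarray*}
(A\bu_1(t)-A\bu_2(t),\bu_1(t)-\bu_2(t))_V
&\le& (\cS\bu_2(t)-\cS\bu_1(t),\bu_1(t)-\bu_2(t))_V\\
&& {}+(j_1-j_2)(\bu_2(t))-(j_1-j_2)(\bu_1(t))\\
&& {}+(\fb_1(t)-\fb_2(t),\bu_1(t)-\bu_2(t))_V.
\end{eqnarray*}
The left-hand side dominates $m_\cA\|\bu_1(t)-\bu_2(t)\|_V^2$; the $\cS$ term is controlled by $L\int_0^t\|\bu_1(s)-\bu_2(s)\|_V\,ds\cdot\|\bu_1(t)-\bu_2(t)\|_V$ via Definition \ref{d0}; the $j$ difference is bounded using the $1$-Lipschitz continuity of the positive-part map and the trace inequality $(\ref{trace})$ by $c_0\|F_1-F_2\|_{L^2(\Gamma_3)}\|\bu_1(t)-\bu_2(t)\|_V$; Cauchy--Schwarz handles the $\fb$ term. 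Dividing by $\|\bu_1(t)-\bu_2(t)\|_V$ produces a Volterra-type inequality whose Gronwall resolution yields
\begin{equation*}
\|\bu_1-\bu_2\|_{C([0,T];V)}\le C\bigl(\|F_1-F_2\|_{L^2(\Gamma_3)}+\|\fb_1-\fb_2\|_{C([0,T];V)}\bigr).
\end{equation*}
The matching estimates for $\bsigma$ and $\bo$ then follow from $(\ref{v1})$, $(\ref{v1p})$, Theorem \ref{t0} and the continuity of $\widetilde A,\widetilde\cS,\widetilde\cR,\bvarepsilon$ and $G$ from Lemma \ref{l1n}.

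The only genuinely delicate point is this last stability step, where the $F$-dependence sits inside the non-smooth functional $j$ while the memory sits inside $\cS$; both perturbations must be absorbed in a single Gronwall argument while preserving linear dependence on $\|F_1-F_2\|_{L^2(\Gamma_3)}$ and $\|\fb_1-\fb_2\|_{C([0,T];V)}$. The remainder of the argument is routine verification of the hypotheses of Theorems \ref{t1} and \ref{t2} and bookkeeping through Propositions \ref{p1}--\ref{p3}.
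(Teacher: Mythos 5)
Your proposal is correct and follows essentially the same route as the paper: verification of the hypotheses of Theorem \ref{t1} with $K=X=V$ for Problem $\cP_1^V$, transfer of existence and uniqueness to $\cP_2^V$ and $\cP_3^V$ through Propositions \ref{p1}--\ref{p3}, and the same crossed-testing/Gronwall argument (with the trace inequality \eqref{trace} absorbing the $F$-perturbation) for the Lipschitz estimate, propagated to $\bsigma$ and $\bo$ via \eqref{v1} and \eqref{v1m}. The only point to make explicit in your ``cycle backwards'' step is that passing from uniqueness of the strain (or displacement) back to uniqueness of the stress requires the injectivity of $\widetilde{A}^{-1}+\widetilde{\cR}$, i.e.\ the invertibility statement of Theorem \ref{t0}, which the paper invokes at exactly this point; two distinct solutions of $\cP_2^V$ yield distinct solutions of $\cP_3^V$ only thanks to this fact.
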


\begin{proof}  Using assumption (\ref{Am}) it is easy to see that the operator $A\colon V\to V$ defined by (\ref{MA}) is a strongly monotone Lipschitz continuous operator. Moreover, recall that the operator $\cS\colon C([0,T];V)\to C([0,T];V)$ given by (\ref{MS}) is a history-dependent operator. In addition, assumption (\ref{Fm}) guarantees that the function $j\colon V\to\real$ defined by (\ref{Mj}) is a continuous seminorm and, therefore, it is convex and lower semicontinuous.  Finally, the regularities (\ref{f0}), (\ref{f2}) imply that $\fb\in C([0,T];V)$.  Therefore, we are in a~position
to apply Theorem \ref{t1} with $K=X=V$. In this way we prove the existence of a unique solution to Problem $\cP_1^V$.

Next, using Propositions \ref{p1} and \ref{p2} we deduce the solvability of Problems $\cP_2^V$ and $\cP_3^V$, respectively. To prove their uniqueness, we proceed as follows. Assume that $\bo$ and $\widetilde{\bo}$ represent two solutions of Problem $\cP_3^V$. Then Proposition
\ref{p3} shows that there exist two functions  $\bu,\,\widetilde{\bu}\colon [0,T]\to V$ such that
\begin{equation}\label{v1pp}
	\bo(t)=\bvarepsilon(\bu(t)),\quad \widetilde{\bo}(t)=\bvarepsilon(\widetilde{\bu}(t)) \quad\forall\, t\in[0,T].
\end{equation}
Moreover, $\bu$ and $\widetilde{\bu}$ are solutions of Problem ${\cal P}^V_1$. Now, using the uniqueness of the solution of  Problem ${\cal P}^V_1$ we deduce that $\bu=\widetilde{\bu}$ and, therefore (\ref{v1pp}) implies that $\bo=\widetilde{\bo}$. This proves the uniqueness of  the solution of Problem $\cP_3^V$.

Similarly, assume that $\bsigma$ and $\widetilde{\bsigma}$ represent two solutions of Problem $\cP_2^V$. Then Proposition
\ref{p2} combined with the uniqueness of the solution of Problem $\cP_3^V$ show that
\begin{equation*}
\widetilde{A}^{-1}\bsigma(t)+\widetilde{\cR}\bsigma(t)=\widetilde{A}^{-1}\widetilde{\bsigma}(t)+\widetilde{\cR}\widetilde{\bsigma}(t)\qquad\forall\, t\in[0,T].
\end{equation*}
Then the inversibility of the operator $\widetilde{A}^{-1}+\widetilde{\cR}\colon C([0,T];Q)\to
C([0,T];Q)$, guaranteed by Theorem \ref{t0}, implies that $\bsigma=\widetilde{\bsigma}$.
This proves the uniqueness of the solution of Problem $\cP_2^V$.

Assume now that  $(F_1,\fb^1),\,(F_2,\fb^2)\in L^2(\Gamma_3)\times C([0,T];V)$ and denote by
$\bu_i\in C([0,T];V)$ the solution of inequality (\ref{Mx8}) for $F=F_i$ and $\fb=\fb^i$, $i=1,2$. Then, for any $t\in[0,T]$ and $\bv\in V$ we have
	\begin{eqnarray}
		&&\label{Mx11}(A\bu_1(t),\bv-\bu_1(t))_V+
		(\cS\bu_1(t),\bv-\bu_1(t))_V\\ [2mm]
		&&\qquad\qquad+\int_{\Gamma_3}F_1\,v_\nu^+\,da-\int_{\Gamma_3}F_1\,u_{1\nu}^+(t)\,da\ge (\fb^1(t),\bv-\bu_1(t))_V,\nonumber\\ [5mm]
		&&\label{Mx12}(A\bu_2(t),\bv-\bu_2(t))_V+
		(\cS\bu_2(t),\bv-\bu_2(t))_V\\ [2mm]
		&&\qquad\qquad+\int_{\Gamma_3}F_2\,v_\nu^+\,da-\int_{\Gamma_3}F_2\,u_{2\nu}^+(t)\,da\ge (\fb^2(t),\bv-\bu_2(t))_V. \nonumber
	\end{eqnarray}
	We take $\bv=\bu_2(t)$ in (\ref{Mx11}),   $\bv=\bu_1(t)$ in (\ref{Mx12}), then we add the resulting inequalities to find that
	\begin{eqnarray*}
		&&(A\bu_1(t)-A\bu_2(t),\bu_1(t)-\bu_2(t))_V\le (\cS\bu_1(t)-\cS\bu_2(t),\bu_2(t)-\bu_1(t))_V\\ [2mm]
		&&\quad+\int_{\Gamma_3}(F_1-F_2)(u_{2\nu}^+(t)-u_{1\nu}^+(t))\,da+
		(\fb^1(t)-\fb^2(t),\bu_1(t)-\bu_2(t))_V.
	\end{eqnarray*}
	Next, the strong monotonicity of $A$ and the trace inequality (\ref{trace})	yield
	\begin{eqnarray*}
		&&m_{\cal A}\|\bu_1(t)-\bu_2(t)\|^2_V\le
		\|\cS\bu_1(t)-\cS\bu_2(t)\|_V\|\bu_1(t)-\bu_2(t)\|_V
		\\ [2mm]
		&&\quad+ c_0\|F_1-F_2\|_{L^2(\Gamma_3)}\|\bu_1(t)-\bu_2(t)\|_V+
		\|\fb^1(t)-\fb^2(t)\|_V\|\bu_1(t)-\bu_2(t)\|_V.
	\end{eqnarray*}
The previous inequality implies that
\begin{eqnarray*}
	&&m_{\cal A}\|\bu_1(t)-\bu_2(t)\|_V\le\|\cS\bu_1(t)-\cS\bu_2(t)\|_V
	\\ [2mm]
	&&\qquad\qquad+c_0\|F_1-F_2\|_{L^2(\Gamma_3)}+
	\|\fb^1(t)-\fb^2(t)\|_V.
\end{eqnarray*}

We now use definition (\ref{MS}) and assumption (\ref{Bm}) to see that there exists $c_1>0$ such that\begin{eqnarray*}
	&&m_{\cal A}\|\bu_1(t)-\bu_2(t)\|_V\le c_1\int_0^t\|\bu_1(s)-\bu_2(s)\|_V\,ds
	\\ [2mm]
	&&\qquad\qquad+c_0\|F_1-F_2\|_{L^2(\Gamma_3)}+
	\|\fb^1(t)-\fb^2(t)\|_V.
\end{eqnarray*}

	Next, by using a Gronwall argument and definition (\ref{cann})
	we see that there exists $C>0$ which does not depend on $F_i$ and $\fb^i$, $i=1,2$, such that
	\[\|\bu_1-\bu_2\|_{C([0,T];V)}\le C\big(\|F_1-F_2\|_{L^2(\Gamma_3)}+\|\fb^1-\fb^2\|_{C([0,T];V)}\big)\]
	which shows that the solution $\bu\in C([0,T];V)$ depends  Lipschitz continuously on the data $(F,\fb)\in L^2(\Gamma_3)\times C([0,T];V)$.
	The Lipschitz continuity of the solutions $\bsigma$ and $\bo$ follows now from equalities (\ref{v1}) and (\ref{v1m})
	and the properties of the operators $A$, $\cS$,
	$\widetilde{A}^{-1}$ and $\widetilde{\cR}$.
\end{proof}

\begin{remark}\label{r1}
	Note that the unique solvability of Problem $\cP_2^V$ can be obtained directly. A sketch of the proof is as follows. First, note that
	\begin{equation*}
	\Sigma(t)= \Sigma_0+\bvarepsilon({\fb}(t))\quad\forall\, t\in[0,T],
\end{equation*}
where $\Sigma_0$ is the time-independent nonempty closed convex subset of $Q$ defined by
\[\Sigma_0=\{\,\btau\in Q\, :\, (\btau,\bvarepsilon(\bv))_Q
+j(\bv)\ge 0\ \ \forall\, \bv\in V \,\}.
\]
Then, using the change of unknown given by $\bsigma=\overline{\bsigma}+\bvarepsilon({\fb})$,
we find that Problem $\cP_2^V$ is equivalent with a history-dependent variational inequality of the form $(\ref{zz})$ on the space $X=Q$,  associated to the convex $K=\Sigma_0$, in which the unknown is the auxiliary stress field $\overline{\bsigma}$.
Theorem $\ref{t1}$ guarantees the unique solvability of this inequality which, in turn, provides the unique solvability of Problem $\cP_2^V$.
\end{remark}

\begin{remark}\label{r2}
The unique solvability of Problem $\cP_3^V$ can be obtained directly, by using Theorem $\ref{t2}$.
Indeed, it is easy to check that assumptions
$(\mathcal{A})$, $(\mathcal{S})$ and $(\Sigma)$  are satisfied for the inclusion $(\ref{Mx10})$
with $X=Q$, operators $\widetilde{A}^{-1}$, $\widetilde{R}$ and the function $g=\bvarepsilon(\fb)$.
\end{remark}

We end this section with the remark  that (\ref{Mx8}), (\ref{Mx9}) represent history-de\-pen\-dent inequalities and
(\ref{Mx10}) is a history-dependent inclusion.
Despite the fact that these problems have a different structure, each of them can be interpreted as a variational formulation of the contact Problem $\cP$. We conclude from here that the variational formulation of contact models is not unique and could lead to different mathematical problems which, in fact, are dual of each other. Moreover, anyone among the displacement, the stress or the strain field can be considered as main unknown of the corresponding contact model, provided that an appropriate variational formulation is used.

We refer to a  triple $(\bu,\bsigma,\bo)$ such that $\bu$ is a solution of Problem $\cP_1^V$, $\bsigma$ is a~solution of Problem $\cP_2^V$ and $\bo$ is a solution of Problem $\cP_3^V$, as a weak solution to the contact Problem $\cP$.
We note that  Theorem \ref{Mt2} provides the unique weak solvability of Problem $\cP$ as well as the Lipschitz continuous dependence of the weak solution
with respect to the data $\fb$ and  $F$.

{Moreover, using standard arguments and Remark \ref{r4} it can be proved that if any of the solution to Problems $\cP^V_1$, $\cP^V_2$ or $\cP^V_3$ is smooth enough, then  the weak solution satisfies the equations and boundary conditions \eqref{1m}--\eqref{6m} in the strong sense, i.e. at each point $\bx\in\Omega$ and at any time moment $t\in[0,T]$.}

\section{Numerical approximation}\label{s7}
\setcounter{equation}0

In this section, we present numerical simulations for the contact Problem~$\cP$ by using its variational formulation given in Problem~$\cP_1^V$.
Throughout the rest of this paper, we assume that (\ref{Am})--(\ref{Fm}) hold, even if we do not mention it explicitly. By virtue of Theorem \ref{Mt2}, we conclude that Problem $\cP_1^V$ has a unique solution $\bu\in C([0,T];V)$.
We start by introducing a fully-discrete scheme to approximate the solution of Problem $\cP_1^V$.

Let $V^h$ be a~finite-dimensional subspace of $V$, where $h$ is positive real number which denotes the spatial discretization step.
Throughout this section, we assume that $V^h$ is the space of piecewise affine continuous functions, given by
\begin{equation*}
V^h=\{w^h\in C(\bar{\Omega};\real^d)\,\,\,|\,\, \,w^h|_{\widetilde{T}}\in [\mathbb{P}_1(\widetilde{T})]^d
\,\,\,\,\,{\rm for\,\, all}\,\,\widetilde{T}\in {\mathcal{T}}^h\}\subset V.
\label{Vh}
\end{equation*}

\noindent
Here, $\mathcal{T}^h$ is a family of finite element partitions of $\Omega$ and $\mathbb{P}_1(\widetilde{T})$ denotes the space of affine functions on $\widetilde{T}$.
We divide the time interval $[0, T]$ into $N$ equal pieces of length $k=\frac{T}{N}$ and  we denote $t_n = k\, n$, for all $n=1, 2, \dots, N$. Moreover, for a continuous function $g=g(t)$ we use the short-hand notation $g_n = g(t_n)$, for all $n=1, 2, \dots, N$.

We now can introduce the  following discrete version of Problem $\cP_1^V$.%, which we call Problem $\cP_1^{Vh}$:

\noindent
\textbf{Problem} $\cP_1^{Vh}$. {\it Find 
a displacement $\bu^{hk} = \{\bu^{hk}_i\}_{i=1}^N \subset V^h$ such that
the inequality below holds:}
\begin{eqnarray}\label{P1_disc}
&&(A\bu^{hk}_i,\bv^h-\bu^{hk}_i)_V+((\cS\bu^{hk})_i\,,\bv^h-\bu^{hk}_i)_V+j(\bv^h)-j(\bu^{hk}_i)\\ [2mm]
&&\qquad\qquad\ge (\fb_i , \bv^h-\bu^{hk}_i)_V\qquad\forall\, \bv^h\in V^h, \ i= 1,\dots,N.\nonumber
\end{eqnarray}

\medskip
\noindent
The unique solvability of the discrete Problem $\cP_1^{Vh}$ can be easily proved by using arguments similar to those used in the proof of {Theorem \ref{Mt2}}.

To present the numerical solution of problem (\ref{P1_disc}), we utilize the mechanical example based on the two-dimensional physical setting shown in Figure~1, which represents the cross-section of a three dimensional viscoelastic body. We note this section by $\Omega\subset\mathbb{R}^2$.
The body
is clamped on the part $\Gamma_1 = [0\,\unit{m}, 1\,\unit{m}]  \times \{ 0\,\unit{m} \}$ and, therefore, the displacement field vanishes there. On the part $\Gamma_3 = [4\,\unit{m}, 5\,\unit{m}] \times \{ 0\,\unit{m} \}$ the body is in potential frictionless contact with a~rigid-plastic penetrable foundation with the yield limit $F$.
Moreover, it is acted from the top by a vertical force of density $\fb_2$. Therefore, denoting by $\Gamma_2$ the remaining part of the boundary of $\Omega$  and using notation $\mathcal{C}^u((x_0, y_0), \, r)$\ for the upper semicircle of radius $r$ ($y \ge y_0$)  centred at the point $(x_0, y_0)$,
for any $\bx=(x,y)\in\Gamma_2$ and $t\in[0,T]$ we have
\begin{align*}
	&\fb_2((x, y), t)= \left \{ \begin{array}{ll}
		(0, f_{2 \, y}(t))\, \unit{N \, m^{-2}}, &  \text{if } (x, y) \in \mathcal{C}^u((2.5\,\unit{m}, 2.0\,\unit{m}), \, 2.5\,\unit{m}), \\
		(0, 0)\, \unit{N \, m^{-2}}, & \text{otherwise}.
	\end{array} \right.
\end{align*}

\begin{figure}[t]
\centering
    \hspace*{-1cm}
    \includegraphics[width=0.5\linewidth]{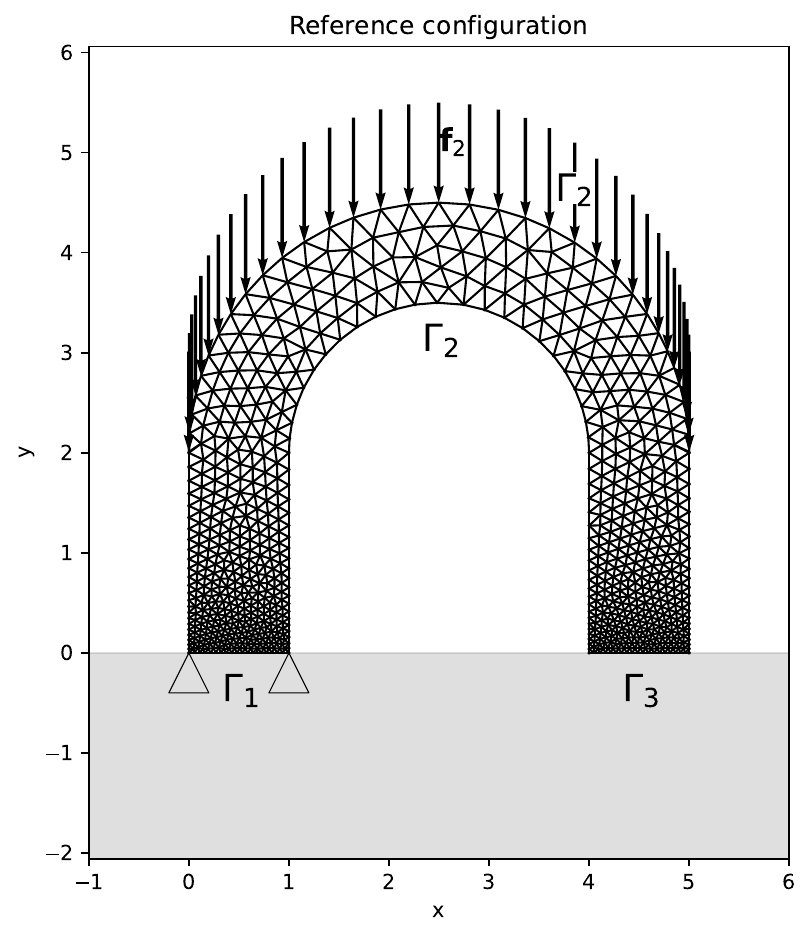}
 \caption{Reference configuration of the body.} \label{img::ref}
\end{figure}

For simplicity of the analysis, we neglect the body forces and, therefore, we assume that $\fb_0=\bzero$.
Moreover, we assume that the body behaves linearly and the components of elasticity and relaxation tensors are given by
\begin{eqnarray}
({\mathcal A}\bo)_{ij}& = &\frac{E \kappa}{(1 + \kappa)(1 - 2\kappa)}(\omega_{11} + \omega_{22})\delta_{ij} \, + \, \frac{E}{1 + \kappa}\omega_{ij} \label{elastic_tensor}\\
&&\qquad\qquad\qquad\qquad\forall
\, \bo=(\omega_{ij}) \in \mathbb{S}^2, \ i,j = 1, 2,  \nonumber\\[2mm]
({\mathcal B}(t) \, \bo)_{ij}& =& b \, \omega_{ij} \qquad  \forall \, \bo=(\omega_{ij}) \in \mathbb{S}^2, \ i,j = 1, 2, \quad t \in [0, T]. \label{relaxation_tensor}
\end{eqnarray}
\noindent
Here and below $\delta_{ij}$ is the Kronecker delta, $b$ is a relaxation parameter, and $E$ and $\kappa$ are Young's modulus and Poisson's ratio of the body material, respectively. For the simulations we present below, we use
the following input parameters:
\begin{align*}
 E &= 10^4 \,\unit{N \, m^{-2}}, \quad \kappa = 0.4, \\
F &= 10, \quad b = 10^4 \, \unit{N\, m^{-2} s^{-1}},\\
f_{2 \, y}(t) &= 10 \sin t,  \quad t \in [0,T].\\
\end{align*}

\vspace{-10mm}
In order to obtain a numerical solution, a spatial discretization with variable mesh size is used, with a maximum size of 0.275\,m inside the domain $\Omega$ and not exceeding 0.06\,m for elements lying directly on $\Gamma_1$ and $\Gamma_3$. This gives rise to a spatial domain discretized into 822 elements, including 20 contact elements, the total number of degrees of freedom being 1644.

To find a solution of the discrete variational inequality~(\ref{P1_disc}) with linear elasticity and relaxation tensors defined as in (\ref{elastic_tensor}) and (\ref{relaxation_tensor}), respectively, we use an opti\-mi\-za\-tion-based method described in detail in \cite{JOB}. 
We approximate the integral term by the right rectangle formula in each subinterval $[t_i, t_{i+1}]$ of $[0,T]$. We use the following approximation of the time integral operator in~\eqref{Mx8}:
\begin{equation*}
    \int_0^{t_n}{\cal B}(t_n-s)\bvarepsilon(\bu(s))\,ds \approx k \sum_{j=1}^{n} {\cal B}(t_n-t_j)\bvarepsilon(\bu_j).    \label{eq:integral}
\end{equation*}

\noindent
Therefore, using \eqref{MS} we have
\begin{align}
    ((\cS\bu)_i, \bv)_V &= \left(k \sum_{j=1}^{i} {\cal B}(t_i-t_j)\bvarepsilon(\bu_j), \, \bvarepsilon(\bv)\right)_Q \label{eq:Sh}  \\
    &= ((\cS \bu)_{i-1}, \bv)_V + (k {\cal B}(0) \bu_i, \bv)_V = ((\cS \bu)_{i-1}, \bv)_V + (k b \bu_i, \bv)_V\nonumber
\end{align}
for all $\bu, \bv \in V^h$ and $i = 1, \dots, n$.
Note that in $i$-th time step the  values of $\bu_0, \dots , \bu_{i-1}$ are known and, therefore, $(\cS\bu)_{i-1}$ is known too.
Thus, for every time step $i$, we use \eqref{eq:Sh} in order to introduce the cost functional $\mathcal{L}_i\colon V \to \R$ %for all $\bw^h \in V^h$ 
given by
\begin{equation*}
\mathcal{L}_i(\bw^h)=\frac{1}{2} (A \bw^h + k b \bw^h, \bw^h)_V
+j(\bw^h) +((\cS\bu^{hk})_{i-1} - \fb_i \,,\bw^h)_V\label{L}
\end{equation*}
for all $\bw^h \in V^h$, which is a convex functional. %In addition, we use $\partial \mathcal{L}_i$  to denote  the subgradient of $\mathcal{L}_i$ in the sense of the convex analysis.
We are now in a position to find a sequence of minimizers of functionals $\mathcal{L}_i$, i.e., solve the following optimization problem. 

\medskip
\noindent
\textbf{Problem ${\cP_{1}^{Oh}}$.} {\it Find $\bu^{hk} = \{\bu_i^{hk}\}_{i=1}^N \subset V^h$ such that }
\begin{align*}
	0 \in & \partial \mathcal{L}_i( \bu_i^{hk} )\qquad  \forall\ i=1,\dots,N.
\end{align*}
%{\it for all\ $i\in\{1,\dots,N\}$}.

\medskip

It can be shown that the operators appearing in inequality \eqref{P1_disc} satisfy the assumptions considered in [8] and, furthermore, the optimization Problem ${\cP_{1}^{Oh}}$ is equivalent to Problem $\cP_1^{Vh}$. For a deeper insight into the theory related to the optimization approach and the appropriate potential energy functional we mention the book \cite{W}. Moreover,
a more detailed analysis of the selected approach compared to other widely used methods can be found in \cite{OJB}.
To solve Problem ${\cP_{1}^{Oh}}$ we use our original software \textit{Conmech} that is a user-friendly tool written entirely in Python for conducting contact simulations and analyzing results.
To enhance the performance of native Python, we utilized the just-in-time compiler \textit{Numba} \cite{NUMBA}.
The package provides comprehensive support for simulations, ranging from easy definition of the shape and material properties of the body to generating computational meshes and performing empirical error analysis.
It supports simulations for both static, quasistatic and dynamic problems, in two or three dimensions.
The goal of the package is to easily extend existing models with additional physical effects, which is achieved thanks to the modularity of the software.
The package is open-source and provided under the GPL-3.0 license.

\begin{figure}[h!]
\centering
    \includegraphics[width=\linewidth]{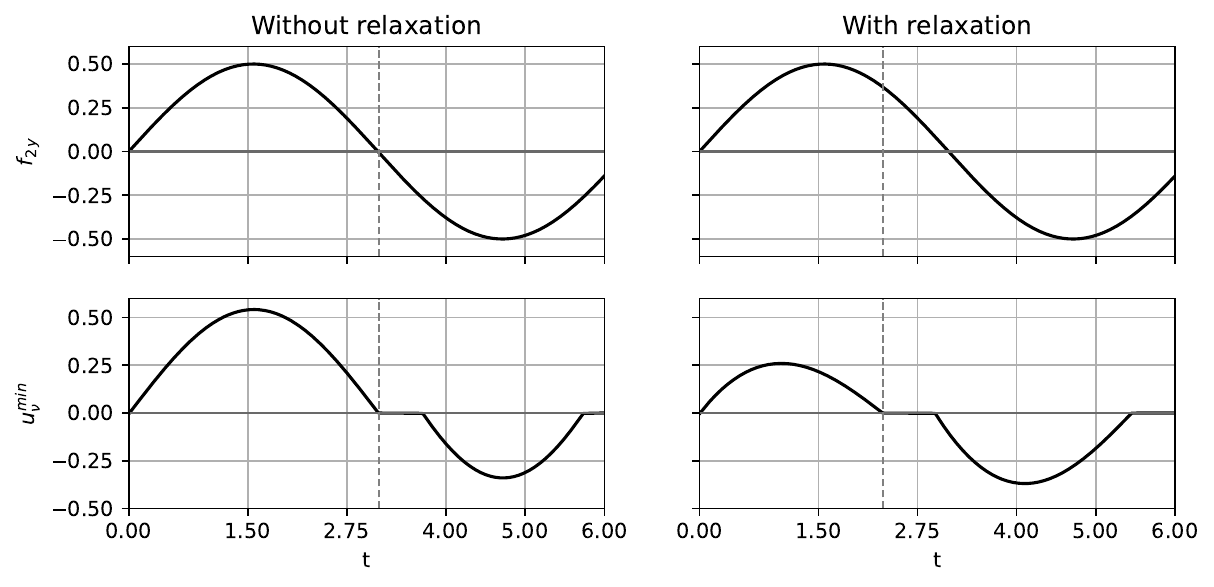}
 \caption{Time evolution of tractions (top row) and normal displacements (bottom row), in the case
 without relaxation (right column) and with relaxation (left column).} \label{img::fp}
\end{figure}

\begin{figure}
	\centering
	\hspace*{-1cm}
	\includegraphics[width=\linewidth]{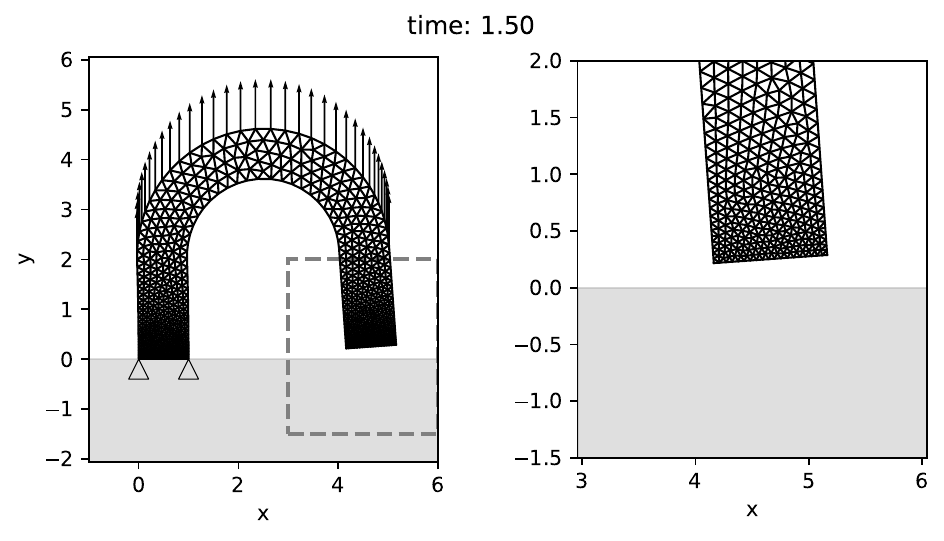}
	\hspace*{-1cm}
	\includegraphics[width=\linewidth]{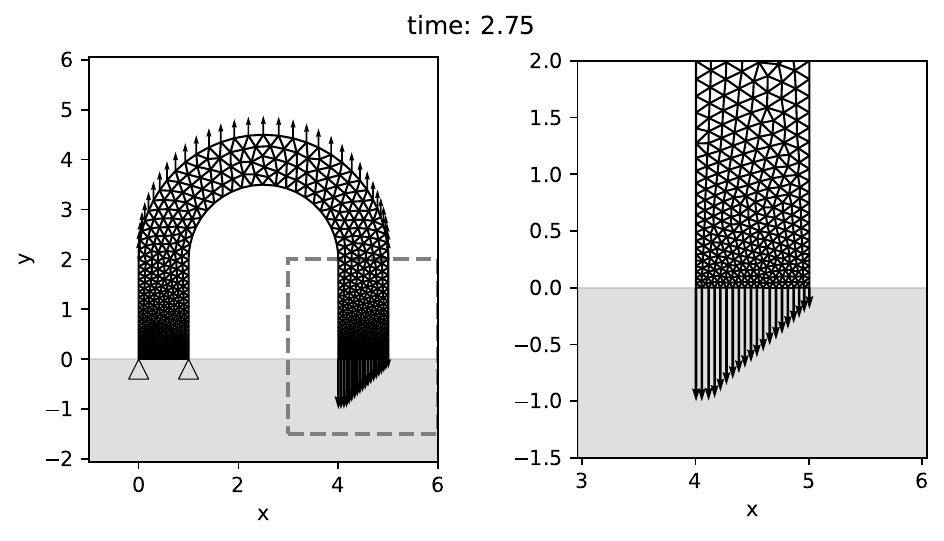}
	\caption{Deformed configuration of the body and stress vectors at $t=1.5\, \unit{s}$ and $t=2.75\, \unit{s}$.} \label{img::comp1}
\end{figure}

\begin{figure}
	\centering
	\hspace*{-1cm}
	\includegraphics[width=\linewidth]{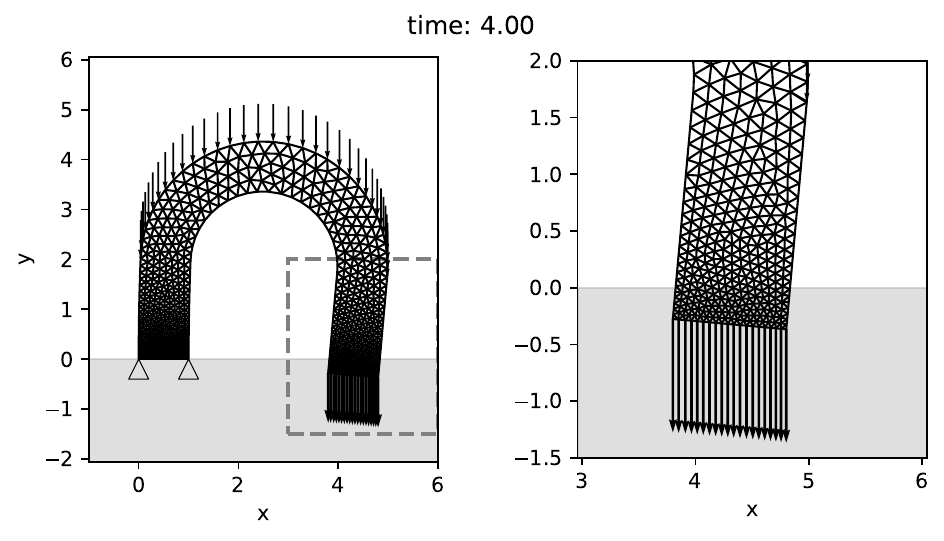}
	\hspace*{-1cm}
	\includegraphics[width=\linewidth]{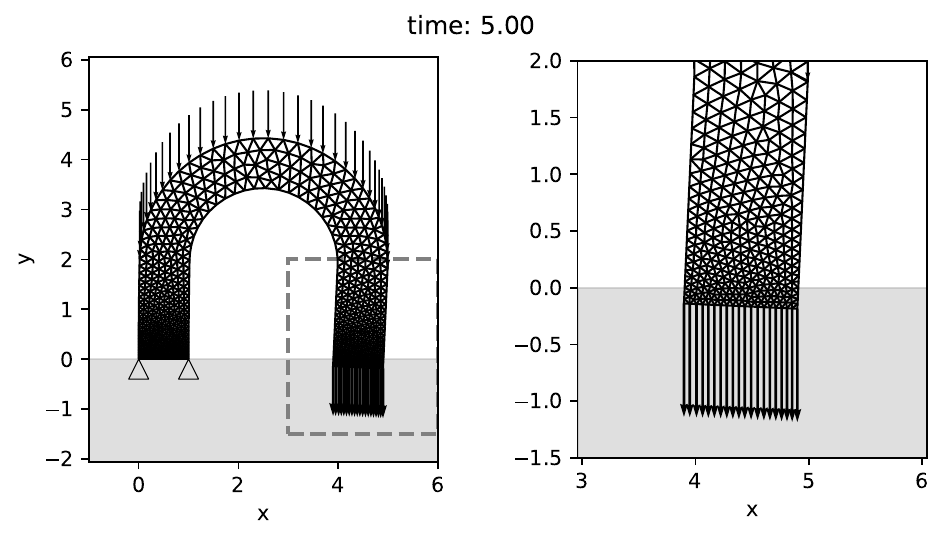}
	\caption{Deformed configuration of the body and stress vectors at $t=4\, \unit{s}$ and $t=5\, \unit{s}$.} \label{img::comp2}
\end{figure}

Our numerical results are presented in Figures \ref{img::fp}--\ref{img::comp2} and are described in what follows.

First, in the upper row  of Figure~\ref{img::fp} we plot the graph of the function $f_{2y}(t)=10\, \sin t$ scaled by a factor $0.2$. In the lower row we plot the evolution in time of minimum of the normal displacement  on the  potential contact surface $\Gamma_3$.  We consider two cases: the case when the body has a purely elastic behavior (i.e., the relaxation coefficient $b$ vanishes, see the left column of the figure)  and the case when the body  has a viscoelastic behavior. In this case the relaxation is taken into account (i.e., $b = 10^4 \, \unit{N m^{-2} s^{-1}}$, see the  right column of the figure).
It results from this figure that, as expected, the relaxation reduces the sensitivity of the body to  changes in applied forces and constrains it to return to its reference configuration faster.
This is particularly visible at the time moment marked by the dashed line, which represents the moment when the contact between the body and the foundation arises along the entire boundary $\Gamma_3$. This time moment we denote by $t_c$. 
In the elastic case (i.e., without relaxation) we have $t_c>2.75 \, \unit{s}$ and we note that $t_c$ coincides with the moment when the forces acting on the body vanish. However, in the  viscoelastic case (i.e., with relaxation), the  normal displacements on the contact surface are smaller and $t_c<2.75\, \unit{s}$. This behavior shows that, in this case, the body comes back to its reference configuration earlier.

Figures~\ref{img::comp1} and \ref{img::comp2} represent the current configuration of the viscoelastic body at various time moments, together with the external tractions and the opposite of the stress vectors on the potential contact surface. There, for
clarity, the length of the vectors representing external forces has been scaled by a factor of $0.2$, and the length of the arrows representing the opposite of the stress vector by a factor of $0.1$.
Moreover, on the left side, a zoom-in view of the contact boundary surface is presented. Note that in these two figures the stress vectors on the contact surface are vertical and, therefore, they reduce to their normal component. This result is in agreement with the assumption that the contact is frictionless.

The first row of Figure~\ref{img::comp1} concerns the time moment $t=1.5\, \unit{s}$. At that moment the applied
traction is upward and there is separation between the body and the foundation. As a consequence, the stress vector on the boundary $\Gamma_3$ vanishes. The second row of the figure  concerns the time moment $t=2.75\, \unit{s}$. At  that moment the contact arises, even if the applied traction is directed upwards. The explanation arises from the relaxation term in the constitutive law
in which the coefficient $b$ is large enough and, as a consequence, the resulted  stress push the body towards the foundation.
Note that at $t=2.75\, \unit{s}$  the contact is without penetration, since the absolute value of the normal stress is below the value of the yield limit $F=10$.

Figure~\ref{img::comp2}  concerns the time moments $t=4\, \unit{s}$ and $5\, \unit{s}$. At these moments the applied forces are downward, the contact arises on all the points of $\Gamma_3$ and is with penetration.  At these moments the magnitude of the normal stress reaches the yield limit $F$ and remain constant regardless of the depth of penetration.

\vskip 2mm
\noindent {\bf Acknowledgments}\\
The project has received funding from the European Union's Horizon 2020 Research and Innovation Programme under the Marie Sklo\-do\-wska-Curie grant agreement no.\ 823731 CONMECH.
The first two authors are supported by
National Science Center, Poland, under project OPUS no. 2021/41/B/ST1/01636.
The first author of the publication received an incentive scholarship from the funds of the program Excellence Initiative - Research University at the Jagiellonian University in Krakow.


\begin{thebibliography}{11}

	\bibitem{AH} S. Adly and T.  Haddad, An implicit sweeping process approach to quasistatic evolution variational inequalities, {\it SIAM J. Math. Anal.} {\bf 50} (2018), 761--778.



	\bibitem{AS} S. Adly and M. Sofonea, Time-dependent inclusions and sweeping processes in contact mechanics,  {\it J. Appl. Math. Phys. (ZAMP)} 70 (2019) Art. 39, 19 pp. https://doi.org/10.1007/s00033-019-1084-4.


{\bibitem{BC}
C. Baiocchi and A. Capelo, {\it Variational and Quasivariational
	In\-equa\-li\-ties: Applications to Free-Boundary Problems}, John Wiley, Chichester, 1984.}

{
\bibitem{B}
H. Br\'ezis, Equations et in\'equations non lin\'eaires dans les
espaces vectoriels en dualit\'e, 
{\it Ann. Inst. Fourier	(Grenoble)} {\bf 18} (1968), 115--175.}



	\bibitem{C}
	A. Capatina, {\it Variational Inequalities and Frictional Contact Problems},
	Advances in Mechanics and Mathematics {\bf 31},
	Springer, Heidelberg, 2014.


{
\bibitem{Dz}
A.D. Drozdov,
{\it Finite Elasticity and Viscoelasticity--A Course in the Nonlinear Mechanics of Solids}, World Scientific, Singapore, 1996.}

	\bibitem{DL}
	G. Duvaut and J.-L. Lions, {\it Inequalities in Mechanics and
		Physics\/}, Springer-Verlag, Berlin, 1976.




	\bibitem{EJK}
	C. Eck, J. Jaru\v sek and M. Krbe\v c, {\it Unilateral Contact
		Problems: Variational Methods and Existence Theorems}, Pure and
	Applied Mathematics {\bf 270}, Chapman/CRC Press, New York, 2005.

{\bibitem{GJKR}
J. Gwinner, B. Jadamba, A. Khan and F. Raciti, {\it Uncertainty Qualification in Variational Inequalities. Theory, Numerics and Applications}, CRC Press, Boca Raton, 2022.}


	\bibitem{HS}
	W. Han and M. Sofonea, {\it Quasistatic  Contact Problems in
		Viscoelasticity and Viscoplasticity}, Studies in Advanced
	Mathematics {\bf 30}, Americal Mathematical Society, Providence,
	RI--International Press, Somerville, MA, 2002.



\bibitem{RS1} R. Hu and M. Sofonea,  Duality arguments for well-posedness of  history-dependent  variational inequalities
{\it Electronic Journal of Differential Equations} {\bf 2022} (03)  (2022), 1--10.


\bibitem{JOB} M. Jureczka, A. Ochal and P. Bartman, {\it A nonsmooth optimization approach for time-dependent hemivariational inequalities}, Nonlinear Analysis: Real World Applications {\bf 73} (2023), 103871, https://doi.org/10.1016/j.nonrwa.2023.103871.

\bibitem{NUMBA} S. K. Lam, A. Pitrou, S. Seibert, {\it Numba: A llvm-based python jit compiler} (2015).

{
	\bibitem{KOZ}
M. Kamenskii, V. Obukhovskii and P. Zecca, {\it  Condensing Multivalued Maps and
Semilinear Differential Inclusions in Banach Spaces}, De Gruyter Ser. Nonlinear Anal. Appl. 7,
Walter de Gruyter, Berlin, 2001.}

	\bibitem{KO} N. Kikuchi and J.T. Oden, {\it Contact Problems in
		Elasticity: A Study of Va\-ria\-tio\-nal Inequalities and Finite Element Methods}, SIAM, Philadelphia, 1988.







	\bibitem{MOS}
	S. Mig\'orski, A. Ochal and M. Sofonea,
	{\it Nonlinear Inclusions and Hemivariational Inequalities.
		Models and Analysis of Contact Problems},
	Advances in Mechanics and Mathematics \textbf{26},
	Springer, New York, 2013.







	\bibitem{NS2} F. Nacry and M. Sofonea,
	A history-dependent sweeping processes in contact mechanics, {\it Journal of Convex Analysis}
	{\bf 29} (2022), 77--100.



	\bibitem{NP}
	Z. Naniewicz and P. D. Panagiotopoulos,
	{\it Mathematical Theory of Hemivariational Inequalities
		and Applications}, Marcel Dekker, Inc., New York, Basel,
	Hong Kong, 1995.

  \bibitem{OJB} A. Ochal, M. Jureczka and P. Bartman, {\it A survey of numerical methods for hemivariational inequalities with applications to Contact Mechanics}, Communications in Nonlinear Science and Numerical Simulation {\bf 114} (2022), 106563.


	\bibitem{P1}
	P.D. Panagiotopoulos,
	{\it Inequality Problems in Mechanics and Applications\/},
	Birkh\"{a}user, Boston, 1985.

{
\bibitem{Pipkin}
A.C. Pipkin,  {\it Lectures on Viscoelasticity Theory}, Applied Mathematical Sciences~{\bf 7}, Springer, New York, 1986.}

{\bibitem{P}
P.D. Panagiotopoulos, {\it Hemivariational Inequalities, 
	Applications in Mechanics and Engineering}, Springer-Verlag,
Berlin, 1993.}

{
	\bibitem{S} M. Sofonea, {\it Well-posed Nonlinear Problems. A Study of Mathematical Models of Contact}, Advances in Mathematics and Mechanics {\bf 50}, Birkhauser, Cham, 2023.}



	\bibitem{SM}
	M. Sofonea and A. Matei,
	{\it Mathematical Models in Contact Mechanics},
	London Mathematical Society Lecture Note Series {\bf 398},
	Cambridge University Press, 2012.

	\bibitem{SMBOOK}
	M. Sofonea and S. Mig\'orski,
	{\it Variational-Hemivariational Inequalities with Applications}, Pure and Applied Mathematics, Chapman \& Hall/CRC
	Press, Boca Raton-London, 2018.

	\bibitem{SX}
	M. Sofonea  and Y.B. Xiao, Sweeping process arguments in the analysis and control of a viscoelastic frictional contact  problem, {\it Deterministic and Stochastic Optimal Control and Inverse Problems}, B. Jadamba, A.A. Khan, S. Mig\'orski and M. Sama Eds., CRC Press, Boca Ration, 2021, 170-196.


{
	\bibitem{Tem}
R. Temam, {\it Navier-Stokes Equations. Theory and Numerical Analysis},  North-Holland, Amsterdam, 1977.}

 \bibitem{W}
    P. Wriggers,
    {\it Computational Contact Mechanics},
    Wiley, Chichester, 2002.
    
    
    {
    \bibitem{ZOZ}
     V. Zvyagin, V. Obukhovskii and A. Zvyagin,  On inclusions with
    multivalued operators and their applications to some optimization problems, {\it Journal of
    Fixed Point Theory and Applications} {\bf 16} (2014), 27--82. } 

\end{thebibliography}
\end{document}